\numberwithin{equation}{section}
\NewDocumentCommand{\NewTodoAuthor}{mmm}{%
  \expandafter\NewDocumentCommand\csname #1\endcsname{s m}{%
    \IfBooleanTF{##1}
      {\todo[inline,author=#2,color=#3]{##2}}%
      {\todo[author=#2,color=#3]{##2}}%
  }%
}
\theoremstyle{plain}
\newtheorem{theorem}{Theorem}[section]
\newtheorem{lemma}[theorem]{Lemma}
\newtheorem{prop}[theorem]{Proposition}
\newtheorem{cor}[theorem]{Corollary}
\newtheorem{claim}[theorem]{Claim}
\theoremstyle{definition}
\newtheorem{definition}[theorem]{Definition}
\newtheorem{remark}[theorem]{Remark}
\newtheorem{example}[theorem]{Example}
\newtheorem{introtheorem}{Theorem}
\newcommand{\peso}[2]{{(#1,#2)_{\varpi}}}
\newcommand{\hh}[1]{\mathbf{H}_{#1}}
\newcommand{\HH}[1]{\underline{\mathbf{H}}_{#1}}
\newcommand{\newword}[1]{\emph{\textbf{#1}}}
\newcolumntype{L}{>{$}l<{$}} 
\newcolumntype{C}{>{$}c<{$}} 
\newcommand{\Ne}[1]{\mathbf{N}_{#1}}
\newcommand{\He}{\mathbf{H}}
\newcommand{\M}{\mathbf{M}}
\newcommand{\N}{\mathbf{N}}
\newcommand{\Z}{\mathbb{Z}}
\newcommand{\lemsthree}[3]{Lemmas~\ref{#1}, \ref{#2}, and~\ref{#3}}
\newcommand\pair[2]{\textcolor{#1}{\cancel{\textcolor{black}{#2}}}}
\newcommand{\pairA}[1]{\pair{red}{#1}}
\newcommand{\pairB}[1]{\pair{cyan}{#1}}
\newcommand{\pairC}[1]{\pair{teal}{#1}}
\newcommand{\pairD}[1]{\pair{violet}{#1}}
\title{Atomic decomposition for an affine Weyl group of type $G_2$}
\author{Bárbara Muniz}
\address{
Institute of Mathematics\\
    Jagiellonian University in Krak{\'o}w\\ 
    Poland
}
\email{baarbarasm@hotmail.com}
\author{David Plaza}
\address{
Instituto de Matem\'aticas \\
Universidad de Talca \\
Talca, Chile
}
\email{dplaza@utalca.cl}
\author{Claudia Rojas-Andías}
\address{
Facultad de Ciencias Básicas\\
Universidad Católica del Maule \\
Chile
}
\email{clrojas@ucm.cl}
\date{\today}
\newcommand\scalemath[2]{\scalebox{#1}{\mbox{\ensuremath{\displaystyle #2}}}}
\begin{document}

\begin{abstract}
We show that the elements of the Kazhdan--Lusztig basis of the spherical Hecke algebra of type $G_2$ have an atomic decomposition. 
As a by-product,  we obtain a new algorithm to compute generalized Kostka--Foulkes polynomials in type $G_2$. 
\end{abstract}

\maketitle

\section{Introduction}
Kazhdan–Lusztig polynomials arise as the transition coefficients between the canonical (Kazhdan–Lusztig) basis and the standard basis of the Hecke algebra attached to a Coxeter system \cite{kazhdan1979representations}. 
They are fundamental objects in representation theory and geometry because they encode important structural information in both settings.
In this paper we work with an affine Weyl group of type $G_2$ and focus on a distinguished family of  elements indexed by dominant weights, which we call \emph{spherical elements}.
They are  obtained by translating the longest element of the finite Weyl group by a dominant weight  (see Section~2 for the precise definition). 
For spherical elements,  Kazhdan–Lusztig polynomials coincide with  Kostka–Foulkes polynomials or $q$-weight multiplicities \cite{lusztig1983singularities,kato1982spherical}, and in particular their coefficients are known to be non-negative.

In type~A, Lascoux discovered a further positivity phenomenon for spherical elements \cite{lascoux1989cyclic}. 
He introduced a basis now called the atomic basis and proved that the canonical basis elements expand in this basis with coefficients in $\mathbb{N}[q]$. 
We refer to this phenomenon  by saying that Kazhdan--Lusztig basis elements have an atomic decomposition. 
Lecouvey and Lenart   \cite{lecouvey2021atomic} later studied atomic decompositions in other types using the theory of crystal.
Their work provides evidence that atomicity extends beyond type $A$ and naturally raises the problem of determining for which affine types, and for which dominant weights, the spherical elements admit atomic decompositions.

Beyond type $A$, atomicity may fail, and a precise characterization of the weights and affine types for which it holds is still out of reach. In this paper we solve this question for an affine Weyl group of type $G_2$.

For a dominant weight $\lambda$, we write $\HH{\lambda}$ for the spherical Kazhdan–Lusztig basis element and $\N_\lambda$ for the corresponding element of the atomic basis. Our main result is the following.

\begin{introtheorem}\label{thr:atomic}\it
For every dominant weight $\lambda$ we have an expansion
\[
\HH{\lambda}=\sum_{\mu\leq \lambda} a_{\lambda,\mu}(q)\N_\mu,
\]
where $\leq $ denotes the dominance order on weights and the polynomials $a_{\lambda,\mu}(q)$ have non-negative coefficients. In other words, in type $G_2$, all the Kazhdan--Lusztig  basis elements associated to spherical elements admits an atomic decomposition. 
\end{introtheorem}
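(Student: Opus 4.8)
The plan is to reduce Theorem~\ref{thr:atomic} to an explicit finite computation, exploiting the fact that in type $G_2$ the dominant weight lattice is two-dimensional. First I would set up coordinates: write $\lambda = a\varpi_1 + b\varpi_2$ with $a,b\in\mathbb{N}$, where $\varpi_1,\varpi_2$ are the fundamental weights. The key structural input, which I expect to be established in an earlier section, is a recursion expressing $\HH{\lambda}$ in terms of $\HH{\mu}$ for $\mu$ smaller in dominance order, obtained by multiplying a lower spherical element by the Kazhdan--Lusztig generators attached to simple reflections (equivalently, by acting with the appropriate Demazure-type operators on the spherical Hecke module). The atomic basis $\N_\mu$ should satisfy a parallel, but simpler, recursion. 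So the first real step is to pin down both recursions precisely and to record the base cases, namely the small weights near the origin and along the two walls of the dominant cone.

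The heart of the argument is then a change-of-basis comparison. Having both $\{\HH{\mu}\}$ and $\{\N_\mu\}$ recursively generated, I would argue by induction on $\lambda$ in the dominance order: assuming $\HH{\mu} = \sum_{\nu\le\mu} a_{\mu,\nu}(q)\N_\nu$ with $a_{\mu,\nu}\in\mathbb{N}[q]$ for all $\mu$ strictly below $\lambda$, apply the $\HH{}$-recursion to produce $\HH{\lambda}$ as an explicit $\mathbb{N}[q]$-linear (this is where the positivity of Kazhdan--Lusztig structure constants for the spherical module in finite rank enters) combination of the $\HH{\mu}$, then substitute the inductive expansions and re-collect in the $\N$-basis using the $\N$-recursion. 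The obstruction to positivity surviving is that the $\N$-recursion is typically not positive: rewriting a product such as $\HH{s}\cdot\N_\mu$ back into the atomic basis can a priori introduce negative coefficients, and this is exactly the phenomenon that causes atomicity to fail in other types. So the crux is to show that in type $G_2$ these potentially negative contributions always cancel.

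I expect this cancellation to be the main obstacle, and I would handle it by a case analysis on the position of $\lambda$ relative to the walls and to a bounded ``transient'' region near the origin. Because the combinatorics of $G_2$ weights stabilizes away from the walls, one should be able to show that for $\lambda$ deep in the interior the coefficients $a_{\lambda,\mu}(q)$ are governed by a finite set of ``shape'' types (depending only on $\lambda - \mu$ and on which wall, if any, $\mu$ lies on), reducing the inductive step to finitely many identities among explicit polynomials in $q$; the finitely many small weights not covered by this stabilization are then checked directly, presumably with the aid of the algorithm mentioned in the abstract. Concretely, the steps in order are: (i) fix notation and recall the $\HH{}$- and $\N$-recursions; (ii) compute base cases and wall cases explicitly; (iii) prove the stabilization lemma classifying the finitely many expansion shapes in the interior; (iv) verify the finitely many resulting polynomial identities, establishing the inductive step and hence the theorem. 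The delicate point throughout is step~(iii): making the notion of ``shape'' precise enough that the induction closes while keeping the number of cases finite.
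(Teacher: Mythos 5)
Your proposal is, at this level of detail, a plan rather than a proof, and it does not supply the ideas that actually make the argument work. The two concrete gaps are the following. First, the recursion you want to use -- expressing $\HH{\lambda}$ via multiplication by spherical Kazhdan--Lusztig generators (Demazure-type operators) -- is exactly the structure the paper avoids: the spherical Hecke algebra $\mathcal{H}^{\textbf{sph}}$ is a $\mathbb{Z}[v,v^{-1}]$-submodule whose multiplication is a nontrivial deformation of that of $\mathcal{H}$, and the paper explicitly declines to use it, working purely with the linear structure. Even granting that such a recursion exists with $\mathbb{N}[q]$ coefficients, you correctly identify that rewriting $\HH{s}\cdot\N_\mu$ back into the $\N$-basis may introduce negative terms; but you provide no mechanism by which these cancel, only an expectation that a ``stabilization lemma'' and a finite case check will close the induction. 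That is precisely the hard part, and nothing in your outline tells us why the cancellation should happen in type $G_2$ when it fails in other types.

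What the paper actually does is different in kind. It introduces the pre-canonical bases $\mathcal{N}^i$ (alternating sums over subsets of $\Phi^{\ge i}$), observes that $\mathcal{N}^6=\{\HH{\lambda}\}$ and $\mathcal{N}^2=\{\N_\lambda\}$ in type $G_2$, and computes explicit closed-form step-by-step decompositions $\mathcal{N}^{i+1}\to\mathcal{N}^i$ (\lemsthree{lem: N6 in N5 and N3 in N2}{lemma N5 enN4}{lemma N4 enN3}). The one negative layer, $\mathcal{N}^5\to\mathcal{N}^4$ when $\lambda_1\equiv 0\pmod 3$, is dealt with by writing it partly in $\mathcal{N}^3$-terms so the negatives are visibly absorbed when one continues down to $\mathcal{N}^2$. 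The second proof modifies the definitions to the adjusted bases $\widetilde{\N}^k$ so that every layer is positive by construction, and then proves $\widetilde{\N}^2_\lambda$ is $\mathbb{N}[q]$-positive in the $\N$-basis by a separate induction (\Cref{prop: N2s}). Both proofs rest on the anti-atomic formula and the action of simple reflections on $\M$-elements (\Cref{lemma -1}), not on any recursion by generators. To make your sketch into a proof you would have to (i) actually produce the claimed $\HH{}$- and $\N$-recursions with explicit coefficients, (ii) prove positivity of the $\HH{}$-recursion, and (iii) exhibit the cancellation; none of these is routine, and (iii) in particular is where the content lies.
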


We give two proofs of this result.

The first proof relies on the step-by-step decompositions of the pre-canonical bases introduced in \cite{libedinsky2022pre}. 
These decompositions do not immediately give atomicity, since one of the intermediate transitions is not positive. 
It is therefore necessary to verify that the negative terms that arise during the process cancel out, yielding the desired positive expansion.

The second proof is obtained by modifying the definition of the pre-canonical bases so that the step-by-step decompositions become positive at each stage, along the line of what was done in \cite{PatimoTorres2023} for type $C_2$. This approach is particularly well-suited for the construction of crystalline realizations of the atomic decomposition, an ongoing line of work by the first author. Such concrete realizations are extremely valuable, as they can be used to combinatorially describe Kostka–Foulkes polynomials as $q$-weight multiplicities — a long-standing open problem beyond type $A$.
We expect that similar modifications of the pre-canonical bases may recover step-by-step positivity in other types as well.

As an application of our results, we provide an explicit algorithm for computing generalized Kostka–Foulkes polynomials in type $G_2$.

The paper is organized as follows. Section~2 reviews the necessary background on the affine Weyl group of type $G_2$, the spherical Hecke algebra, the spherical elements, and the pre-canonical bases. Section~3 contains the first proof of \Cref{thr:atomic} via step-by-step decompositions. Section~4 develops the adjusted bases and presents the second proof. Section~5 describes the computation of generalized Kostka–Foulkes polynomials. Finally, we add an appendix with a \textsc{SageMath} implementation of our algorithms to compute the atomic decomposition, the pre-canonical bases and the Kostka--Foulkes polynomials. 

\section*{Acknowledgments}
This project began while the first two authors were participating in the semester program
\emph{Categorification and Computation in Algebraic Combinatorics}
at the Institute for Computational and Experimental Research in Mathematics (ICERM) in Providence, RI.
The program was supported by the National Science Foundation under Grant No.~DMS--1929284.
Bárbara Muniz was supported by Narodowe Centrum Nauki, grants 2021/43/D/
ST1/02290 and 2021/42/E/ST1/00162.
David Plaza was partially supported by FONDECYT--ANID Grant No.~1240199.
Claudia Rojas-Andías was partially supported by the National Doctoral Scholarship from the Agencia Nacional de Investigación y Desarrollo (ANID, Chile) through grant 21251181.
For the purpose of Open Access, the authors have applied a CC-BY public copyright
licence to any Author Accepted Manuscript (AAM) version arising from this submission.


\section{Preliminaries}
Throughout this paper we denote by $\mathbb{N}$ the set of non-negative integers.

\subsection{The affine Weyl group of type \texorpdfstring{$G_2$}{} }
In this paper we only work with a root system $\Phi$ of type $G_2$.
Therefore we fix an explicit realization of $\Phi$ inside $V=\mathbb{R}^2$ with the usual  inner product denoted by $\langle -,-\rangle$.

We take $\Delta=\{\alpha_{1},\alpha_2\}$ as the set of simple roots, where  
\begin{equation}
  \displaystyle   \alpha_{1}=(1,0) \quad \mbox{ and } \quad
  \alpha_{2}=\left(-\tfrac{3}{2},\,\tfrac{\sqrt{3}}{2}\right).  
\end{equation}
Then the root system $\Phi$ is given by
\begin{equation}
 \Phi=   \{ \pm \alpha_1, \pm\alpha_2,   \pm (\alpha_1+\alpha_2),   \pm (2\alpha_1+\alpha_2)  , \pm (3\alpha_1+\alpha_2), \pm (3\alpha_1+2\alpha_2)  \} . 
\end{equation}
The root system $\Phi$ is depicted in Figure \ref{fig: root G2}, where simple roots are given in red and the positive roots  that are not simple are drawn in blue. 
We denote by $\Phi^\vee =\{\alpha^{\vee} \mid \alpha\in \Phi\}$ the dual root system, where $\displaystyle \alpha^\vee= \frac{2\alpha}{\langle \alpha ,\alpha \rangle} $.

\begin{figure}[h!]
\vspace{-10pt}
\centering
\begin{tikzpicture}[scale=1.5, >=Latex, line cap=round, line join=round]
  \def\rS{1.2}                             
  \pgfmathsetmacro{\rL}{sqrt(3)*\rS}       
  \tikzset{
    lab/.style={font=\scriptsize, inner sep=0.6pt},  
  }

  \colorlet{posred}{red!75!black}
  \colorlet{posblue}{blue!75!black}
  \colorlet{negcol}{black}

  \fill (0,0) circle (1.2pt);

  \draw[posblue,->, thick] (0,0) -- (0,\rL)
    node[pos=1, lab, above] {$3\alpha_{1}+2\alpha_{2}$};
  \draw[posblue,->, thick] (0,0) -- ({\rL*cos(30)},{\rL*sin(30)})
    node[pos=1, lab, above] {$3\alpha_{1}+\alpha_{2}$};
  \draw[posred,->, thick] (0,0) -- ({\rL*cos(150)},{\rL*sin(150)})
    node[pos=1, lab, above] {$\alpha_{2}$};

  \draw[negcol,->, thick] (0,0) -- (0,-\rL)
    node[pos=1, lab, below] {$-3\alpha_{1}-2\alpha_{2}$};
  \draw[negcol,->, thick] (0,0) -- ({\rL*cos(210)},{\rL*sin(210)})
    node[pos=1, lab, below] {$-3\alpha_{1}-\alpha_{2}$};
  \draw[negcol,->, thick] (0,0) -- ({\rL*cos(330)},{\rL*sin(330)})
    node[pos=1, lab, below] {$-\alpha_{2}$};

  \draw[posred,->, thick] (0,0) -- (\rS,0)
    node[pos=1, lab, right] {$\alpha_{1}$};
  \draw[negcol,->, thick] (0,0) -- (-\rS,0)
    node[pos=1, lab, left] {$-\alpha_{1}$};

  \draw[posblue,->, thick] (0,0) -- ({\rS*cos(60)},{\rS*sin(60)})
    node[pos=1, lab, above] {$2\alpha_{1}+\alpha_{2}$};
  \draw[posblue,->, thick] (0,0) -- ({\rS*cos(120)},{\rS*sin(120)})
    node[pos=1, lab, above] {$\alpha_{1}+\alpha_{2}$};

  \draw[negcol,->, thick] (0,0) -- ({\rS*cos(240)},{\rS*sin(240)})
    node[pos=1, lab, below] {$-2\alpha_{1}-\alpha_{2}$};
  \draw[negcol,->, thick] (0,0) -- ({\rS*cos(300)},{\rS*sin(300)})
    node[pos=1, lab, below] {$-\alpha_{1}-\alpha_{2}$};
\end{tikzpicture}
\caption{A root system of type $G_2$}
\label{fig: root G2}
\end{figure}
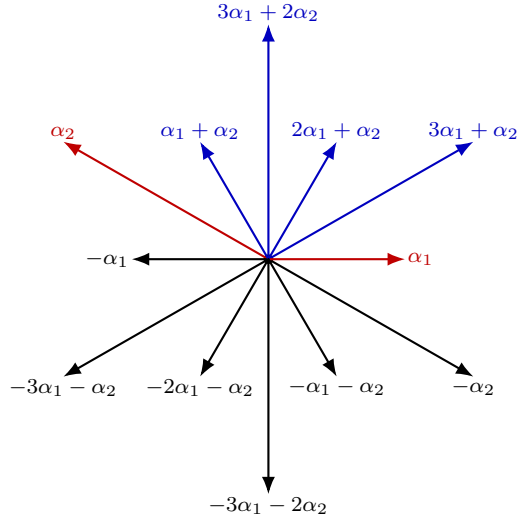

Because the definition of precanonical bases involves the height of positive roots, we include in Table \ref{tab:heights-positive} the height of each positive root.\\

\begin{table}[h!] 
\centering
\begin{tabular}{|c |c| c| c| c| c| c|}
\hline
\mbox{Root} &$\alpha_{1}$ & $\alpha_{2}$ & $\alpha_{1}+\alpha_{2}$ & $2\alpha_{1}+\alpha_{2}$ & $3\alpha_{1}+\alpha_{2}$ & $3\alpha_{1}+2\alpha_{2}$ \\
\hline
Height & $1$ & $1$ & $2$ & $3$ & $4$ & $5$ \\ 
\hline
       &  $ \peso{2}{-1}$ & $\peso{-3}{2}$  &  $\peso{-1}{1}$  & $\peso{1}{0}$ &  $\peso{3}{-1}$ & $\peso{0}{1}$.  \\
 \hline
\end{tabular}
\caption{Heights of the six positive roots in the $G_2$ root system.}
\label{tab:heights-positive}
\end{table}

The fundamental weights $\{\varpi_{1},\varpi_{2}\}$ are determined by the conditions 
$\langle \varpi_{i},\alpha_{j}^{\vee}\rangle=\delta_{i,j}$ for $i,j\in\{1,2\}$.
Concretely, we have $\varpi_1=2\alpha_1+\alpha_2$ and $\varpi_2= 3\alpha_1 +2\alpha_2$. 
Let $X=\mathbb{Z}\varpi_{1}\oplus\mathbb{Z}\varpi_{2}$ be the weight lattice and 
$X^{+}=\mathbb{N}\varpi_{1}\oplus\mathbb{N}\varpi_{2}$ the set of dominant weights. 
To avoid confusion with coordinates in $V$, for 
$\lambda=a_{1}\varpi_{1}+a_{2}\varpi_{2}\in X$ we use the shorthand $\peso{a_{1}}{a_{2}}$ to denote $\lambda$. 
We will frequently use the coefficients of the positive roots when written in the basis of fundamental weights, and we collect these coefficients in Table~\ref{tab:heights-positive}.
We also define $\rho \in X^+$ as the half-sum of positive roots. 
In formulas, 
\begin{equation}
    \rho = \frac{1}{2}\sum_{\alpha \in \Phi^{+}} \alpha  = 5\alpha_1 + 3\alpha_2 = \peso{1}{1}.
\end{equation}

For $\alpha \in \Phi$  and $m\in \mathbb{Z}$  we define the hyperplane $H_{\alpha,m}$ by 
\begin{equation}
    H_{\alpha , m }  = \{  \lambda \in V \mid \langle  \lambda , \alpha^\vee \rangle =m \}
\end{equation}
and the reflection $s_{\alpha, m}$ along the hyperplane $H_{\alpha , m }$. In formulas, we have $s_{\alpha, m}(\lambda) =  \lambda  - ( \langle\lambda , \alpha^\vee \rangle -m) \alpha$. 

The finite Weyl group  $W_f$ attached to $\Phi$ is the group generated by the reflections $\{ s_{\alpha, 0} \mid \alpha \in \Phi\} $. 
Let $s_1=s_{\alpha_{1},0}$ and $s_2=s_{\alpha_2 , 0}$.  It is well-known that $W_f$ is generated by $\{s_1,s_2\}$. 
Indeed $W_f$ is a Coxeter system with presentation
\begin{equation}
    W_f  = \langle   s_1,s_2 \mid s_1^2=s_2^2=1, \quad (s_1s_2)^6=1    \rangle. 
\end{equation}
So that $W_f$ is isomorphic to the  group of symmetries of  a regular  hexagon.  
It longest element is $w_0=(s_1s_2)^3=(s_2s_1)^3$. 

The affine  Weyl group  $W_f$ attached to $\Phi$ is the group generated by the reflections $\{ s_{\alpha, m} \mid \alpha \in \Phi, \,  m\in \mathbb{Z} \} $. 
The connected components of the complement of hyperplanes 
\begin{equation}
    V\setminus \bigcup_{\alpha, m } H_{\alpha , m }
\end{equation}
are called alcoves.  In our $G_2$-setting , alcoves correspond to triangles (see \Cref{fig:G2-alcove})
We denote by $\mathcal{A}$ the set of alcoves.  We call 
\begin{equation}
    A_0 = \{ \lambda \in V \mid  -1 < \langle \lambda , \alpha^\vee \rangle <0, \mbox{ for all } \alpha \in \Phi^+   \}
\end{equation}
the fundamental alcove. 
There is a bijection $W_a\rightarrow \mathcal{A}$ given by $w\mapsto w(A_0)$. 
This bijection allows us to uniquely identify elements in $W_a$ with alcoves. 

The walls of $A_0$ are supported in the hyperplanes $H_{\alpha_1,0}$, $H_{\alpha_{2},0}$ and $H_{\beta, -1}$, where $\beta $ is the highest short root, so that in our setting we have $\beta=2\alpha_1+\alpha_2$. Let $s_0=s_{\beta,-1}$. 
Then $W_a$ is also a Coxeter system with presentation
\begin{equation}
    W_f  = \langle s_0 ,  s_1,s_2 \mid s_0^2=s_1^2=s_2^2=1, \, (s_0s_1)^3= (s_0s_2)^2=(s_1s_2)^6=1   \rangle. 
\end{equation}
Also, the affine Weyl group $W_a$ can be realized as a semidirect product: $W_a=W_f \ltimes  \mathbb{Z}\Phi$, where $\mathbb{Z}\Phi$ acts on $V$ by translations. 
In type $G_2$ as the determinant of the Cartan matrix is $1$  we have $X= \mathbb{Z}\Phi$. 
Of course, this is not always the case and in general we  only have one containment $\mathbb{Z}\Phi \subset X$.  

\begin{definition} \label{def: theta}
    For $\lambda \in X^+$ let $t_{\lambda}$  be the translation by $\lambda$.  
    We define $\theta (\lambda) \in  W_a$ as $\theta (\lambda) = t_\lambda w_0$. 
\end{definition}

With the identification of $W_a$ with the set of alcoves aforementioned, it is easy to identify the alcove corresponding to $\theta (\lambda)$. 
Namely, find the alcove corresponding to $w_0$ and then translate this alcove by $\lambda$. 
The alcoves corresponding to $\theta (\lambda)$ for 
$$\lambda = \peso{0}{0}, \peso{1}{0}, \peso{2}{0}, \peso{3}{0}, \peso{0}{1}, \peso{1}{1}$$ 
are depicted in green in \Cref{fig:G2-alcove}, where the fundamental alcove is highlighted in blue.

\begin{figure}[htbp]
\centering
\begin{tikzpicture}[scale=1, line cap=round, line join=round]

\pgfmathsetmacro{\rt}{0.866025403784} 
\pgfmathsetmacro{\sq}{1.73205080757}  
\newcommand{\wt}[2]{({0.5*(#1)}, {\rt*(#1)+\sq*(#2)})}

\pgfmathsetmacro{\Lx}{3.5}
\pgfmathsetmacro{\Ly}{3.5}

\pgfmathsetmacro{\Amaxraw}{ceil( 2*\Lx )}
\pgfmathsetmacro{\Bpad}{\Ly/\sq}
\pgfmathsetmacro{\Bmaxraw}{ceil( \Bpad + \Amaxraw/2 )}
\pgfmathtruncatemacro{\Amax}{\Amaxraw}
\pgfmathtruncatemacro{\Bmax}{\Bmaxraw}

\newcommand{\drawFamilySmart}[2]{%
  \pgfmathsetmacro{\mminraw}{-(abs(#1)*\Amax + abs(#2)*\Bmax)}
  \pgfmathsetmacro{\mmaxraw}{ (abs(#1)*\Amax + abs(#2)*\Bmax)}
  \pgfmathtruncatemacro{\mmin}{\mminraw}
  \pgfmathtruncatemacro{\mmax}{\mmaxraw}
  \foreach \mm in {\mmin,...,\mmax} {%
    \ifnum #1=0
      \pgfmathsetmacro{\aBase}{0}
      \pgfmathsetmacro{\bBase}{\mm/(#2)}
    \else
      \pgfmathsetmacro{\aBase}{\mm/(#1)}
      \pgfmathsetmacro{\bBase}{0}
    \fi
    \pgfmathsetmacro{\dA}{#2}
    \pgfmathsetmacro{\dB}{-(#1)}
    \pgfmathsetmacro{\T}{\Amax + \Bmax + 2}
    \pgfmathsetmacro{\aS}{\aBase - \T*\dA}
    \pgfmathsetmacro{\bS}{\bBase - \T*\dB}
    \pgfmathsetmacro{\aE}{\aBase + \T*\dA}
    \pgfmathsetmacro{\bE}{\bBase + \T*\dB}
    \draw[line width=0.45pt] \wt{\aS}{\bS} -- \wt{\aE}{\bE};
  }%
}

\newcommand{\rootToAB}[2]{\draw[red!80!black, line width=1.0pt, ->] (0,0) -- \wt{#1}{#2};}

\begin{scope}
  \clip (-\Lx,-\Ly) rectangle (\Lx,\Ly);

  \fill[orange!75, opacity=0.35] (0,0) -- \wt{\Amax}{0} -- \wt{0}{\Bmax} -- cycle;

  \fill[green!85!black, opacity=0.60, draw=yellow!60!black, line width=1pt]
    (0,0) -- \wt{0.5}{0} -- \wt{0}{1/3} -- cycle;
  \fill[green!85!black, opacity=0.60, draw=yellow!60!black, line width=1pt]
    \wt{1}{0} -- \wt{1.5}{0} -- \wt{1}{1/3} -- cycle;
  \fill[green!85!black, opacity=0.60, draw=yellow!60!black, line width=1pt]
    \wt{0}{1} -- \wt{1/2}{1} -- \wt{0}{4/3} -- cycle;
  \fill[green!85!black, opacity=0.60, draw=yellow!60!black, line width=1pt]
    \wt{2}{0} -- \wt{5/2}{0} -- \wt{2}{1/3} -- cycle;
  \fill[green!85!black, opacity=0.60, draw=yellow!60!black, line width=1pt]
    \wt{3}{0} -- \wt{7/2}{0} -- \wt{3}{1/3} -- cycle;

  \fill[green!85!black, opacity=0.60, draw=yellow!60!black, line width=1pt]
    \wt{1}{1} -- \wt{3/2}{1} -- \wt{1}{4/3} -- cycle;

  \fill[cyan!70, opacity=0.55, draw=cyan!60!black, line width=1pt]
    (0,0) -- \wt{-0.5}{0} -- \wt{0}{-1/3} -- cycle;

  \drawFamilySmart{1}{0}   
  \drawFamilySmart{0}{1}   
  \drawFamilySmart{2}{3}   
  \drawFamilySmart{1}{3}   
  \drawFamilySmart{1}{1}   
  \drawFamilySmart{1}{2}   

  \foreach \bInd in {-\Bmax,...,\Bmax}{
    \foreach \aInd in {-\Amax,...,\Amax}{
      \fill \wt{\aInd}{\bInd} circle (1.8pt);
    }
  }
\end{scope}

\rootToAB{ 2}{-1}  
\rootToAB{ 1}{ 0}  
\rootToAB{-1}{ 1}  
\rootToAB{-2}{ 1}  
\rootToAB{-1}{ 0}  
\rootToAB{ 1}{-1}  

\rootToAB{ 3}{-1}  
\rootToAB{ 0}{ 1}  
\rootToAB{-3}{ 2}  
\rootToAB{-3}{ 1}  
\rootToAB{ 0}{-1}  
\rootToAB{ 3}{-2}  

\fill (0,0) circle (2.0pt);

\end{tikzpicture}

\caption{Type \(G_2\) alcoves. The dots correspond to $X$. The dominant chamber is  highlighted. The fundamental alcove is colored in light blue. The green triangle correspond to $\theta$-elements.}
\label{fig:G2-alcove}
\end{figure}

\subsection{Pre-canonical bases}

To any Coxeter system we can attach a Hecke algebra. 
Let $\mathcal{H}$ be the Hecke algebra associated to $W_a$. 
It is the  $\mathbb{Z}[v,v^{-1}]$-algebra associative algebra with generators $\{ \He_0,\He_1, \He_2 \}$ and relations 
\begin{equation}
    \He_i^2=(v-v^{-1})\He_i +1, \quad \He_0\He_1\He_0=\He_1\He_0\He_1, \quad \He_0\He_2=\He_2\He_0, \quad  (\He_1\He_2)^3    =(\He_2 \He_1)^3.
\end{equation}

It comes equipped with two distinguished bases: 
The standard basis $\{ \hh{w} \mid w \in W_a \}$ and the canonical or Kazhdan--Lusztig basis $\{\HH{w} \mid w\in W_a\}$ (see for instance \cite{elias2020introduction}).
The coefficients of the change of basis matrix between the canonical and standard basis are the Kazhdan--Lusztig polynomials. 
More concretely, the Kazhdan--Lusztig polynomials, $h_{x,y}(v)\in \mathbb{Z}[v,v^{-1}]$, are given by 
\begin{equation}
    \HH{w} = \sum_{x\in W_a} h_{x,w}(v) \hh{x}.
\end{equation}
Indeed, it is known that $h_{x,w}(v)\in \mathbb{N}[v]$ for arbitrary Coxeter systems \cite{EliasWilliamson2014}. 
We have a third basis $\{  \Ne{w} \mid w \in W_a\} $ of $\mathcal{H}$, which we refer as the atomic basis. 
Their elements are given by 
\begin{equation}
    \Ne{w} = \sum_{x\leq w} v^{\ell(w) -\ell (x) } \hh{x}, 
\end{equation}
where $\leq $ denotes the Bruhat order and $\ell(\cdot)$ the length function. 

\begin{definition} \label{def:atomicity}
    Let $\He \in \mathcal{H}$ and write it in terms of the atomic basis:
    \begin{equation}
        \He =\sum_{x\in W_a} a_{x}(v) \Ne{x}
    \end{equation}
    for some  $a_{x}(v) \in \mathbb{Z}[v,v^{-1}]$. 
    Then, we say that $\He$ has an atomic decomposition if  $a_{x}(v)\in \mathbb{N}[v]$ for all $x\in W_a$.
\end{definition}

\begin{remark}
It is known that, for any Coxeter system, the Kazhdan--Lusztig polynomials satisfy the \emph{monotonicity property} (see, \cite{plaza2017graded}):
\begin{equation}
  h_{x,w}(v) - v^{\,\ell(y)-\ell(x)}\, h_{y,w}(v) \in \mathbb{N}[v]
\qquad\text{for all } x \le y \le w .  
\end{equation}
Having an atomic decomposition is a substantially stronger condition than monotonicity. 
However, it is also known that many elements do not admit an atomic decomposition. 
In fact, the existence of an atomic decomposition appears to be a rather rare phenomenon.
\end{remark}

For $\lambda \in  X^{+}$ we set $\HH{\lambda} =\HH{\theta (\lambda)}$ and $\Ne{\lambda} =\Ne{\theta (\lambda)}$. 
The spherical Hecke algebra $\mathcal{H}^{\textbf{sph} } $ lives inside $\mathcal{H}$. 
It is the free $\mathbb{Z}[v,v^{-1}]$-submodule of $\mathcal{H}  $ with canonical basis $\{ \HH{\lambda}  \mid \lambda \in X^{+} \} $ and atomic basis $ \{ \N_{\lambda} \mid \lambda \in X^+  \}$. 
Although $\mathcal{H}^{\textbf{sph} } \subset \mathcal{H}$ it is not a sub-algebra of $\mathcal{H}$ since the multiplication in $\mathcal{H}^{\textbf{sph}}$ is defined as a  deformation of the usual multiplication in $\mathcal{H}$. 
In this paper we only use the linear structure of $\mathcal{H}^{\textbf{sph} } $ , so that to avoid unnecessary technicalities we left the multiplication in $\mathcal{H}^{\textbf{sph} } $ undefined and refer the interested reader to \cite[Section 2.2]{libedinsky2022pre}.

We write 
\begin{equation}
    \HH{\lambda} = \sum _{\mu \leq  \lambda }a_{\lambda , \mu} (q)  \Ne{\mu}, 
\end{equation}
where $q=v^2$ and $\leq $ denotes the dominance order on $X$. 
In type $A$ the polynomials $a_{\lambda , \mu}(q)$ are the atomic polynomials introduced by Lascoux  \cite{lascoux1989cyclic}. 
The main result in this paper (\Cref{thr:atomic}) is to prove that $a_{\lambda,\mu}(q)$ have positive coefficients or, following  Definition \ref{def:atomicity}, that the elements of the canonical basis have an atomic decomposition.

\medskip
We now define pre-canonical bases and explain their connection with the atomic decomposition. 
First, we need to give some notation. 

\begin{definition}
    Let $\lambda \in X$. 
    \begin{enumerate}
        \item  We say that $\lambda $ is singular if there is an element $w\in W_f$ which fixes $\lambda + \rho$. 
    A non-singular weight is called regular. 
    \item The dot action of $W_f$ in $V$ is given by the formula:  $w\cdot \lambda = w(\lambda +\rho)-\rho$.  
    \item If $\lambda \in X$ is regular then we define $w_{\lambda} \in W_f$ to be the unique element such that $w_\lambda \cdot \lambda \in X^{+}$. 
    \item For $\lambda \in X$ we define 
\begin{equation}
    \tilde{\HH{}}_{w} = \begin{cases}
        (-1)^{\ell (w_{\lambda})} \HH{w_\lambda \cdot \lambda},  & \mbox{if } \lambda \mbox{ is regular}\\[4pt]
        0, &  \mbox{if } \lambda \mbox{ is singular.}
    \end{cases}
\end{equation}
\item For $i\geq 1$ we define 
\begin{equation}
    \Phi^{\geq i } = \{ \alpha \in \Phi^+ \mid \operatorname{ht}(\alpha) \geq i \}. 
\end{equation}
\item For $I\subset \Phi$ we set $\Sigma_I = \sum_{\alpha \in I } \alpha$.
    \end{enumerate} 
\end{definition}

We are now in position to define pre-canonical bases. 

\begin{definition}
    For $i \geq 2$ and $\lambda \in X^+$ we define
    \begin{equation}
        \Ne{\lambda}^i = \sum _{I\subset \Phi^{\geq i}} (-q)^{|I|} \tilde{\HH{}}_{\lambda- \Sigma_I}.
    \end{equation}
    We call  $\mathcal{N}^i = \{ \Ne{\lambda } \mid \lambda \in X^{+} \} $ the $i$-th pre-canonical basis  of $\mathcal{H}^{\textbf{sph}}$. 
 \end{definition}

The motivation behind this definition is the anti-atomic formula \cite[Theorem 3.1]{libedinsky2022pre} which  asserts that $\mathcal{N}^2$ coincides with the atomic basis.  This is, $\Ne{\lambda} = \Ne{\lambda}^2$ for all $\lambda \in X^+$. 
It is worth to note that the anti-atomic formula hold for arbitrary affine Weyl groups. 
Retuning to the $G_2$-setting, it follows from \Cref{tab:heights-positive} that $\Phi^{\geq m} = \emptyset $ for all $m\geq 6$. 
Thus, $\Ne{\lambda}^m = \HH{\lambda}$ for all $m\geq 6$ and $\lambda \in X^+$. 
Therefore, in order to compute $\HH{\lambda}$ in terms of the atomic basis it is enough to compute the expansion of  an element $\Ne{\lambda}^{i+1}$ in terms of $\mathcal{N}^i$ for $2 \leq i \leq 5$. 

\begin{remark}

In type $A$, it was conjectured in \cite[Conjecture 1.6]{libedinsky2022pre} that, for every $i \ge 2$, the coefficients appearing in the expansion of an element $\Ne{\lambda}^{i+1}$ in terms of the basis $\mathcal{N}^i$ are polynomials with nonnegative coefficients.
This conjecture was recently proved by Yamil Sagurie and the first author \cite{PlazaSagurie25}, using methods that are closely related to the ones developed in this paper.
We emphasize that this step-by-step positivity property implies the existence of an atomic decomposition.

In type $G_2$, however, this step-by-step positivity does \emph{not} hold in general: as we will see in the next section, negative coefficients may appear in some of the intermediate expansions.
Nevertheless---and this is one of the striking features of type $G_2$---these negative terms ultimately cancel out when all steps are combined.
In other words, although the intermediate layers of the process fail to be positive, the \emph{total} expansion remains positive, and the atomic decomposition holds in type $G_2$.
 
\end{remark}

\section{First approach}

\subsection{Inverse step-by-step decompositions}

We begin this section by introducing certain elements that relax the definition of pre-canonical bases elements.

\begin{definition}
    Let  $A\subset \Phi$ and $\lambda \in X$. 
    We define 
   \begin{equation}
        \M^{A}_{\lambda}=\displaystyle \sum_{I \subset A} (-q)^{|I|} \mathbf{\underline{\widetilde{H}}}_{\lambda-\sum_{I}} .
    \end{equation}
\end{definition}

In the following lemma we collect our two main tools to obtain inverse step-by-step decompositions.

\begin{lemma}\label{lemma -1}
For all $A \subseteq \Phi$ and  $\lambda \in X$ we have
\begin{enumerate}[label=(\alph*),ref=(\alph*)]
    \item \label{item A} If  $\alpha \in A$ and  $B=A\setminus\{ \alpha \}$ then $ \M^{A}_{\lambda} = \M^{B}_{\lambda} -q \M^{B}_{\lambda -\alpha}$.
    \item  \label{item B}
        $\M^{A}_\lambda=-\M^{s_{i}(A)}_{s_{i}\cdot \lambda}$. In particular, if  $\langle \lambda , \alpha_{i}^\vee\rangle  =-1$ and  $s_{i}(A)=A$ then $\M^{A}_{\lambda}=0$. 
\end{enumerate}
\end{lemma}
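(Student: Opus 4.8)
The plan is to prove both identities by direct manipulation of the defining sum $\M^A_\lambda = \sum_{I\subseteq A}(-q)^{|I|}\tilde{\underline{\mathbf H}}_{\lambda-\Sigma_I}$, so I should first check nothing in the statement depends on $A$ being contained in $\Phi^+$ or on $\lambda$ being dominant — indeed the objects $\tilde{\underline{\mathbf H}}_\mu$ are defined for all $\mu\in X$ (zero on singular weights, a signed spherical KL basis element on regular ones), so the formulas make sense for arbitrary $A\subseteq\Phi$ and $\lambda\in X$.

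For part~\ref{item A}: with $\alpha\in A$ and $B=A\setminus\{\alpha\}$, every subset $I\subseteq A$ either avoids $\alpha$ (so $I\subseteq B$) or contains $\alpha$ (so $I=\{\alpha\}\sqcup J$ with $J\subseteq B$). Splitting the sum accordingly gives
\[
\M^A_\lambda=\sum_{I\subseteq B}(-q)^{|I|}\tilde{\underline{\mathbf H}}_{\lambda-\Sigma_I}+\sum_{J\subseteq B}(-q)^{|J|+1}\tilde{\underline{\mathbf H}}_{\lambda-\alpha-\Sigma_J}
=\M^B_\lambda-q\,\M^B_{\lambda-\alpha},
\]
using $\Sigma_{\{\alpha\}\sqcup J}=\alpha+\Sigma_J$. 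This is just a bookkeeping step with the binomial-type bijection on subsets; I expect no difficulty here.

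For part~\ref{item B}: the key input is that the simple reflection $s_i$ acts on the collection $\{\tilde{\underline{\mathbf H}}_\mu\}_{\mu\in X}$ via the \emph{dot} action with a sign, namely $\tilde{\underline{\mathbf H}}_{s_i\cdot\mu}=-\tilde{\underline{\mathbf H}}_\mu$ for every $\mu\in X$; this follows from the very definition of $\tilde{\underline{\mathbf H}}$ — if $\mu$ is singular so is $s_i\cdot\mu$ and both sides are $0$, while if $\mu$ is regular then $w_{s_i\cdot\mu}=w_\mu s_i$, so $(-1)^{\ell(w_{s_i\cdot\mu})}=-(-1)^{\ell(w_\mu)}$ and the two produce opposite signs of the same $\underline{\mathbf H}_{w_\mu\cdot\mu}$. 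Now reindex the sum defining $\M^A_\lambda$ by $I\mapsto s_i(I)$, a bijection from subsets of $A$ to subsets of $s_i(A)$ preserving cardinality, and use linearity of the dot action of $W_f$ on $V$ to get $s_i\cdot(\lambda-\Sigma_I)=s_i(\lambda+\rho)-\rho-s_i(\Sigma_I)=(s_i\cdot\lambda)-\Sigma_{s_i(I)}$. Hence
\[
\M^A_\lambda=\sum_{I\subseteq A}(-q)^{|I|}\tilde{\underline{\mathbf H}}_{\lambda-\Sigma_I}
=-\sum_{I\subseteq A}(-q)^{|I|}\tilde{\underline{\mathbf H}}_{s_i\cdot(\lambda-\Sigma_I)}
=-\sum_{J\subseteq s_i(A)}(-q)^{|J|}\tilde{\underline{\mathbf H}}_{(s_i\cdot\lambda)-\Sigma_J}
=-\M^{s_i(A)}_{s_i\cdot\lambda}.
\]
For the ``in particular'' clause, note that $\langle\lambda,\alpha_i^\vee\rangle=-1$ means $\langle\lambda+\rho,\alpha_i^\vee\rangle=0$, i.e. $s_i(\lambda+\rho)=\lambda+\rho$, so $s_i\cdot\lambda=\lambda$; combined with the hypothesis $s_i(A)=A$ the displayed identity reads $\M^A_\lambda=-\M^A_\lambda$, forcing $\M^A_\lambda=0$. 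The only genuinely non-formal point is the sign rule $\tilde{\underline{\mathbf H}}_{s_i\cdot\mu}=-\tilde{\underline{\mathbf H}}_\mu$, and in particular checking it also holds when exactly one of $\mu,s_i\cdot\mu$ is on the dominant-chamber boundary in a degenerate way — but since a weight that is $W_f$-conjugate (under the dot action) to a dominant one is either regular with a well-defined $w_\mu$ or singular, the case analysis above is exhaustive, so I anticipate the main obstacle is merely stating this sign lemma cleanly rather than any real difficulty.
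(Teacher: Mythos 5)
Your proof is correct and complete, and it goes a bit further than the paper does. Part~\ref{item A} is exactly the subset-splitting argument the paper alludes to when it says the claim ``follows directly by the definition.'' For part~\ref{item B} the paper simply cites \cite[Proposition~4.3]{libedinsky2022pre}; your self-contained derivation --- establishing the sign rule $\tilde{\underline{\mathbf H}}_{s_i\cdot\mu}=-\tilde{\underline{\mathbf H}}_\mu$ from the identity $w_{s_i\cdot\mu}=w_\mu s_i$ in the regular case and the preservation of singularity in the singular case, then reindexing $I\mapsto s_i(I)$ and using $s_i\cdot(\lambda-\Sigma_I)=(s_i\cdot\lambda)-\Sigma_{s_i(I)}$ --- is precisely the content of that cited proposition, so you have reproved it rather than taken a different route. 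One small remark worth making explicit, which you handled correctly but implicitly: because the dot action is affine, $s_i\cdot(\lambda-\Sigma_I)$ is \emph{not} $s_i\cdot\lambda - s_i\cdot\Sigma_I$; the cancellation of $\rho$ that yields the linear shift $-s_i(\Sigma_I)$ is the crux of the reindexing, and you got it right. The ``in particular'' clause also needs $\langle\rho,\alpha_i^\vee\rangle=1$, which you used tacitly; it is standard but worth flagging if you write this up in full.
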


\begin{proof}
   The first claim follows directly by the definition of $\M$-elements. 
   The second one is the content of \cite[Proposition 4.3]{libedinsky2022pre}. 
\end{proof}

We now obtain all the inverse step-by-step decompositions. 

\begin{prop} \label{prop: inverse decomp}
    Let $\lambda = \peso{\lambda_1}{\lambda_2} \in X^+$. 
    We have
\begin{equation}
\begin{array}{lll}
       \N_{\lambda}^5  = \scalemath{0.85}{\left\{   \begin{array}{rl}
         \N_{\lambda}^6  -q \N_{\peso{\lambda_1}{\lambda_2-1}}^6,    &  \mbox{if } \lambda_2\geq 1; \\
         &  \\
          \N_{\lambda}^6 ,   & \mbox{if } \lambda_2=0.
       \end{array}   \right.}    &    \quad  
 \N_{\lambda}^4  = \scalemath{0.85}{\left\{   \begin{array}{rl}
         \N_{\lambda}^5  -q \N_{\peso{\lambda_1-3}{\lambda_2+1}}^5,    &  \mbox{if } \lambda_1\geq 3; \\
         &  \\
          \N_{\lambda}^5 ,   & \mbox{if } \lambda_1=2;\\
          &  \\
          \N_{\lambda}^5 +q \N^{5}_{\peso{0}{\lambda_2}},   & \mbox{if } \lambda_1=1;\\
          & \\
          \N^{5}_{\lambda} + q \N^{5}_{\peso{1}{\lambda_2 -1}}, & \mbox{if } \lambda_1=0, \lambda_2 \geq 1;\\
          & \\
          \N^{5}_{\lambda} & \mbox{if } \lambda_1=\lambda_2=0.
       \end{array}   \right. }    \\[20pt]
          &    \\
  \N_{\lambda}^3  = \scalemath{0.85}{\left\{   \begin{array}{rl}
         \N_{\lambda}^4  -q \N_{\peso{\lambda_1-1}{\lambda_2}}^4,    &  \mbox{if } \lambda_1\geq 1; \\
         &  \\
          \N_{\lambda}^4 - q^{2} \N^{4}_{\peso{2}{\lambda_2 -2}} ,   & \mbox{if } \lambda_1=0, \lambda_2 \geq 2;\\
         & \\
         \N^{4}_{\lambda}, & {\text{else}}
       \end{array}   \right. }   & \quad 
      \N_{\lambda}^2  = \scalemath{0.85}{\left\{   \begin{array}{rl}
         \N_{\lambda}^3  -q \N_{\peso{\lambda_1+1}{\lambda_2-1}}^3,    &  \mbox{if } \lambda_2\geq 1; \\
         &  \\
         \N^{3}_{\lambda}, & \mbox{if } \lambda_2=0.
       \end{array}   \right. }
    \end{array}
\end{equation}
\end{prop}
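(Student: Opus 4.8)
The plan is to compute each transition $\N_\lambda^{i+1}\rightsquigarrow\mathcal{N}^i$ directly from the definition of the pre-canonical bases, using only the two mechanisms of \Cref{lemma -1}: peeling off one root at a time via \ref{item A}, and using the reflection/vanishing identity \ref{item B} to kill or fold back the error terms. Recall that $\N_\lambda^{i+1}=\M_\lambda^{\Phi^{\geq i+1}}$ and $\N_\lambda^i=\M_\lambda^{\Phi^{\geq i}}$, and that $\Phi^{\geq i}\setminus\Phi^{\geq i+1}$ is exactly the set of positive roots of height $i$. By \Cref{tab:heights-positive} each of the height sets for $i=2,3,4,5$ is a singleton except $i=1$, which is irrelevant here; concretely the new root at level $5$ is $3\alpha_1+2\alpha_2=\peso{0}{1}$, at level $4$ is $3\alpha_1+\alpha_2=\peso{3}{-1}$, at level $3$ is $2\alpha_1+\alpha_2=\peso{1}{0}$, and at level $2$ is $\alpha_1+\alpha_2=\peso{-1}{1}$. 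So in every case \ref{item A} gives immediately
\[
\N_\lambda^i=\M_\lambda^{\Phi^{\geq i}}=\M_\lambda^{\Phi^{\geq i+1}}-q\,\M_{\lambda-\gamma_i}^{\Phi^{\geq i+1}}=\N_\lambda^{i+1}-q\,\N_{\lambda-\gamma_i}^{i+1},
\]
where $\gamma_i$ is the height-$i$ root. This already yields the ``generic'' line in each case, so the entire content of the proposition is to analyze the correction term $\N_{\lambda-\gamma_i}^{i+1}=\M_{\lambda-\gamma_i}^{\Phi^{\geq i+1}}$ when $\lambda-\gamma_i$ is no longer dominant.

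The key step is therefore: for each of the four levels, determine for which $\lambda\in X^+$ the weight $\lambda-\gamma_i$ fails to be dominant, and in that case rewrite $\M_{\lambda-\gamma_i}^{\Phi^{\geq i+1}}$ using \ref{item B}. The recipe is to choose $i\in\{1,2\}$ with $\langle\lambda-\gamma_i,\alpha_j^\vee\rangle<0$ and apply $\M_\mu^{A}=-\M_{s_j\cdot\mu}^{s_j(A)}$. This works cleanly precisely when $s_j$ stabilizes the set $A=\Phi^{\geq i+1}$: one checks that $s_1$ preserves $\Phi^{\geq 2}$ and $\Phi^{\geq 4}$ (but not $\Phi^{\geq 3}$ or $\Phi^{\geq 5}$, where $s_1$ moves the lowest surviving root out of the set) — here the reader should just verify the handful of cases against Table~\ref{tab:heights-positive} and the action of $s_1,s_2$ on the fundamental weights. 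When the relevant simple reflection stabilizes $A$, two subcases occur: if $\langle\lambda-\gamma_i,\alpha_j^\vee\rangle=-1$ then $s_j\cdot(\lambda-\gamma_i)=\lambda-\gamma_i$ and \ref{item B} forces $\M_{\lambda-\gamma_i}^{A}=0$, which produces the lines where the correction term disappears (e.g. $\lambda_2=0$ at level $5$, or $\lambda_1=2$ at level $4$); if $\langle\lambda-\gamma_i,\alpha_j^\vee\rangle\le -2$ then $s_j\cdot(\lambda-\gamma_i)$ is dominant and equals the fundamental-weight expression appearing in the statement, with the sign $-(-1)=+1$ flipping the $-q$ to $+q$ (the lines $\lambda_1=1$ and $\lambda_1=0,\lambda_2\ge1$ at level $4$, and $\lambda_1=0,\lambda_2\ge2$ at level $3$, where one reflection does not suffice and a second application of \ref{item B}, or an intermediate vanishing, yields the $q^2$ term). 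One simply runs through the finitely many residue classes of $\langle\lambda,\alpha_j^\vee\rangle$ modulo the relevant small integers and records the outcome; the fundamental-weight coordinates in Table~\ref{tab:heights-positive} make each $s_j\cdot(\lambda-\gamma_i)$ an explicit linear expression in $\lambda_1,\lambda_2$, which one matches against the claimed formulas.

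The main obstacle is bookkeeping rather than conceptual: at level $3$ and the $\lambda_1=0$ branch of level $4$ the naive single reflection lands on a weight that is still non-dominant (or on the singular locus), so one must iterate \ref{item B} and keep track of how $\Phi^{\geq i+1}$ transforms under successive reflections, checking at each stage whether the set is stabilized (so that \ref{item B} applies and signs compose) or whether a wall is hit (so that the term vanishes). Getting the level-$3$, $\lambda_1=0,\lambda_2\ge 2$ case to collapse to the single term $-q^2\N^4_{\peso{2}{\lambda_2-2}}$ is the delicate point: there one expects that after peeling the height-$3$ root one is left with $\M_{\peso{-1}{\lambda_2}}^{\Phi^{\geq 4}}$, and since $s_1$ does \emph{not} stabilize $\Phi^{\geq 4}\ni\alpha_1$ one must instead peel off $3\alpha_1+\alpha_2=\peso{3}{-1}$ first via \ref{item A} again (this is the source of the extra factor of $q$), arrive at a set that \emph{is} $s_1$-stable, and then apply \ref{item B}. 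Once the correct order of peeling is identified in this one case, all remaining cases follow by the same two-move routine, and matching the resulting weights to the fundamental-weight shorthand in the statement is immediate from Table~\ref{tab:heights-positive}.
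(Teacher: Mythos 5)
Your overall strategy is the paper's: peel the unique height-$i$ root $\gamma_i$ via Lemma~\ref{lemma -1}\ref{item A} to obtain $\N_\lambda^i=\N_\lambda^{i+1}-q\,\M^{\Phi^{\geq i+1}}_{\lambda-\gamma_i}$, then use Lemma~\ref{lemma -1}\ref{item B} to kill or fold back the correction term when $\lambda-\gamma_i$ is not dominant. You also correctly single out the level-$3$, $\lambda_1=0$ case as the delicate one, where a second application of \ref{item A} is needed before \ref{item B} becomes available, and that this is where the $q^2$ comes from.

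However, the bookkeeping you record — which is the actual content of the case-check — is wrong. You claim that ``$s_1$ preserves $\Phi^{\geq 2}$ and $\Phi^{\geq 4}$ (but not $\Phi^{\geq 3}$ or $\Phi^{\geq 5}$).'' This is essentially reversed: $s_1$ sends $3\alpha_1+\alpha_2$ (height $4$) to $\alpha_2$ (height $1$), so $s_1$ stabilizes \emph{none} of $\Phi^{\geq 2}$, $\Phi^{\geq 3}$, $\Phi^{\geq 4}$; on the other hand $s_1$ fixes the highest root $3\alpha_1+2\alpha_2$, so $s_1(\Phi^{\geq 5})=\Phi^{\geq 5}$ — which is precisely the stability fact the paper's proof of the level-$4$ case invokes. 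The correct table is: $\Phi^{\geq 5}$ is $s_1$-stable and not $s_2$-stable; $\Phi^{\geq 4}$ and $\Phi^{\geq 3}$ are $s_2$-stable and not $s_1$-stable; $\Phi^{\geq 2}$ is stable under neither; $\Phi^{\geq 6}=\emptyset$ is trivially stable. You later write ``since $s_1$ does not stabilize $\Phi^{\geq 4}\ni\alpha_1$,'' which contradicts your own earlier claim that $s_1$ does preserve $\Phi^{\geq 4}$, and moreover $\alpha_1$ has height $1$ so $\alpha_1\notin\Phi^{\geq 4}$. Since the entire proof is a case-check over exactly these stability facts (they determine which corrections vanish, which fold back with a sign, and where the extra peeling is required), the verification as you have written it down is incorrect even though the plan and the identification of the hard case are right. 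Redoing the stability table correctly, your argument then aligns with the paper's proof.
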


\begin{proof}
All the decompositions follow by combining both claims in Lemma \ref{lemma -1}. 
We only prove the decomposition of $\N_{\lambda}^{4}$ in $\mathcal{N}^5$, since this is the most interesting case. 
The other cases are dealt with similarity and are left to the reader.

We recall from Table \ref{tab:heights-positive} that $\Phi^{\geq 5} = \Phi^{\geq 4} \cup \{ \peso{3}{-1}\}$ and that $s_1(\Phi^{\geq 5}) =\Phi^{\geq 5}$.

\begin{itemize}
    \item If $\lambda_1 \geq 3$ then $\peso{\lambda_1-3}{\lambda_2+1} \in X^+$ and the result follows from Lemma \ref{lemma -1} \ref{item A}. 
    \item If $\lambda_1 =2$ then $\N_{\lambda}^4 = \N_{\lambda}^5- q\N_{\peso{-1}{\lambda_2+1}}^5$. By Lemma \ref{lemma -1} \ref{item B} applied to $s_1$ we obtain $\N_{\peso{-1}{\lambda_2+1}}^5=0$. 
    Thus, $\N_{\lambda}^4 = \N_{\lambda}^5$ as we wanted to show. 
    \item If $\lambda_1 =1$ then $\N_{\lambda}^4 = \N_{\lambda}^5- q\N_{\peso{-2}{\lambda_2+1}}^5$. By Lemma \ref{lemma -1} \ref{item B} applied to $s_1$ we obtain $\N_{\peso{-2}{\lambda_2+1}}^5=-\N_{\peso{0}{\lambda_2}}^5$. 
    Therefore, $\N^{4}_{\lambda}=\N^{5}_{\lambda} + q \N^{5}_{\peso{0}{\lambda_2}}$.
    \item If $\lambda_1=0$ then $\N^{4}_{\lambda}=\N^{5}_{\lambda}-q \N^{5}_{\peso{-3}{\lambda_2+1}}$. By Lemma \ref{lemma -1} \ref{item B} applied to $s_1$ we obtain 
    $\N^{5}_{\peso{-3}{\lambda_2+1}}= -\N_{\peso{1}{\lambda_2-1}}^5$. If $\lambda_2\geq 1$ then $\peso{1}{\lambda_2-1} \in X^+$ and  we are done. 
    Otherwise, $\lambda_2=0$ and $\N_{\peso{1}{-1}}^5 = \tilde{\HH{}}_{\peso{1}{-1}} -q \tilde{\HH{}}_{\peso{1}{-2}}  $. Since both   $\peso{1}{-1}$ and $ \peso{1}{-2}$ are singular weights, we conclude that $\N_{\peso{1}{-1}}^5=0$ and the result follows.    
\end{itemize}
\end{proof}

\subsection{Step-by-step decompositions}
\label{sec:SBS-decompositions}
In this section we invert the decompositions obtained in \Cref{prop: inverse decomp}. 
Among these, the decompositions of $\mathcal{N}^{5}$ inside $\mathcal{N}^{6}$ and of $\mathcal{N}^{2}$ inside $\mathcal{N}^{3}$ are the simplest to analyze. 
Indeed, in both situations the rule is the same: we subtract the corresponding root when the resulting weight remains dominant; if it is no longer dominant, the decomposition consists of a single term. 
This simplicity is mirrored in the inverse decompositions: they are obtained by iteratively subtracting the relevant root, summing over all dominant weights encountered, until a non-dominant weight is reached. 
We record these two cases in the following lemma.

\begin{lemma} \label{lem: N6 in N5 and N3 in N2}
    Let $\lambda = \peso{\lambda_1}{\lambda_2} \in X^{+}$. Then, we have 
    \begin{equation}
             \begin{array}{ccc}
  \displaystyle    \N_{\lambda}^{6} = \sum_{i=0}^{\lambda_2} q^i \N_{\peso{\lambda_1}{\lambda_2-i}}^5     & \mbox{and } & \displaystyle  \N_{\lambda }^3  = \sum_{i=0}^{\lambda_2} q^i \N_{\peso{\lambda_1+i}{\lambda_2-i}}^2. \\            
        \end{array}
    \end{equation}
 In particular, both expressions are positive.    
\end{lemma}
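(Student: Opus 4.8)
The plan is to prove both identities by induction on $\lambda_2$, simply inverting the relevant single-step decompositions recorded in \Cref{prop: inverse decomp}. Since the two statements are proved in exactly the same way, I will describe the argument only for the expansion of $\N^6_{\lambda}$ in terms of $\mathcal{N}^5$; the case of $\N^3_{\lambda}$ in terms of $\mathcal{N}^2$ is obtained by replacing the weight $\peso{\lambda_1}{\lambda_2-1}$ by $\peso{\lambda_1+1}{\lambda_2-1}$ throughout, with no other change.

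For the base case $\lambda_2=0$, \Cref{prop: inverse decomp} gives $\N^5_{\peso{\lambda_1}{0}}=\N^6_{\peso{\lambda_1}{0}}$, which is precisely the claimed formula, the sum having a single term. For the inductive step, I assume the formula holds for all weights whose second coordinate equals $\lambda_2-1$. When $\lambda_2\geq 1$, \Cref{prop: inverse decomp} reads $\N^5_{\lambda}=\N^6_{\lambda}-q\,\N^6_{\peso{\lambda_1}{\lambda_2-1}}$, hence
\[
\N^6_{\lambda}=\N^5_{\lambda}+q\,\N^6_{\peso{\lambda_1}{\lambda_2-1}}.
\]
Substituting the inductive hypothesis $\N^6_{\peso{\lambda_1}{\lambda_2-1}}=\sum_{i=0}^{\lambda_2-1}q^i\,\N^5_{\peso{\lambda_1}{\lambda_2-1-i}}$ and reindexing the resulting sum by $j=i+1$ yields $\N^6_{\lambda}=\sum_{j=0}^{\lambda_2}q^j\,\N^5_{\peso{\lambda_1}{\lambda_2-j}}$, which is the desired expression. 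Positivity is then immediate, as every coefficient is a monomial $q^j$ with $j\geq 0$; the same holds in the second identity.

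I do not expect a genuine obstacle here. The only point requiring (trivial) attention is to check that all weights occurring in the sums, namely $\peso{\lambda_1}{\lambda_2-i}$ and $\peso{\lambda_1+i}{\lambda_2-i}$ for $0\leq i\leq \lambda_2$, remain dominant, so that the symbols $\N^5_{\mu}$ and $\N^2_{\mu}$ are defined; this is clear from the ranges of $i$. One should also keep in mind that the recursion in \Cref{prop: inverse decomp} is applied one step at a time, the iteration being organized by the induction, although an equivalent telescoping computation would work just as well.
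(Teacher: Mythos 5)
Your proof is correct and follows essentially the same route as the paper, which simply asserts that both identities follow by an easy induction on $\lambda_2$ together with \Cref{prop: inverse decomp}; you have merely written out the short telescoping induction that the authors leave to the reader.
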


\begin{proof}
Both identities follow by an easy inductive argument  on $\lambda_{2}$ together with Proposition \ref{prop: inverse decomp}.  
\end{proof}

We now move our attention to the decomposition of elements of $\mathcal{N}^{5}$ in terms of $\mathcal{N}^4$.

\begin{lemma}\label{lemma N5 enN4}
       Let $\lambda = \peso{\lambda_1}{\lambda_2} \in X^{+}$ and write $\lambda_1= 3m+r$ for $m\geq 0$ and $r\in \{0,1,2\}$. 
       Then, we have

       \begin{equation} \label{eq: N5 en N4}
           \N_{\lambda}^5 =  \left\{
           \begin{array}{ll}
     \displaystyle       \sum_{i=0}^m  q^i \N_{\peso{\lambda_1-3i}{\lambda_2+i}}^4   -q^{m}  \sum_{i=1}^{m+\lambda_2} q^{2i-1}  \N_{\peso{1}{m+\lambda_2-i}}^3 , &  \mbox{if } r=0; \\[10pt]
  \displaystyle       \sum_{i=0}^{m-1} q^i \N_{\peso{\lambda_1-3i}{\lambda_2+i}}^4   +q^{m}  \sum_{i=0}^{m+\lambda_2}   q^{2i}\N_{\peso{1}{m+\lambda_2-i}}^3 , &  \mbox{if } r=1; \\[10pt]
   \displaystyle       \sum_{i=0}^m  q^i \N_{\peso{\lambda_1-3i}{\lambda_2+i}}^4  , &  \mbox{if } r=2, 
           \end{array}  \right.
       \end{equation}
       where we make the convention that empty sums are set equal to $0$. 
\end{lemma}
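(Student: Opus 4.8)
The plan is to invert the step-by-step decomposition of $\N^4$ in terms of $\N^5$ recorded in \Cref{prop: inverse decomp}, proceeding by induction on $\lambda_1$. Writing $\lambda_1 = 3m+r$, the base case is $m=0$ (i.e.\ $\lambda_1 \in \{0,1,2\}$), which is handled directly: for $r=2$ the formula asserts $\N^5_\lambda = \N^4_\lambda$, which is exactly the $\lambda_1=2$ line of \Cref{prop: inverse decomp}; for $r=0$ with $\lambda_2 \geq 1$ we must unwind $\N^5_{\peso{0}{\lambda_2}} = \N^4_{\peso{0}{\lambda_2}} - q\,\N^5_{\peso{1}{\lambda_2-1}}$ and then iterate the $r=1$ relation, which itself involves the $\mathcal{N}^3$-terms $\N^3_{\peso{1}{k}}$; for $r=1$ the relation $\N^4_{\peso{1}{\lambda_2}} = \N^5_{\peso{1}{\lambda_2}} + q\,\N^5_{\peso{0}{\lambda_2}}$ must be solved simultaneously with the $r=0$ case. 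The cleanest route is to treat the pair $(r=0, r=1)$ together as a coupled recursion in $\lambda_1$ feeding into a single family of $\N^3_{\peso{1}{\ast}}$ terms, and verify the closed forms by substitution.

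Concretely, for the inductive step with $\lambda_1 = 3m+r \geq 3$ and $r \in \{2\}$, I would apply the first line of \Cref{prop: inverse decomp}, $\N^4_\lambda = \N^5_\lambda - q\,\N^5_{\peso{\lambda_1-3}{\lambda_2+1}}$, hence $\N^5_\lambda = \N^4_\lambda + q\,\N^5_{\peso{\lambda_1-3}{\lambda_2+1}}$, and plug in the induction hypothesis for $\N^5_{\peso{\lambda_1-3}{\lambda_2+1}}$ (which has the same residue $r$ but parameter $m-1$); the telescoping of the geometric-type sums $\sum q^i \N^4_{\peso{\lambda_1-3i}{\lambda_2+i}}$ then produces the claimed expression, and since $r=2$ there is no $\mathcal{N}^3$-tail to track. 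For $r=0$ and $r=1$ the same move is made, but now the base of the induction produces the $\mathcal{N}^3$-terms: when $\lambda_1 = 1$, the relation forces $\N^5_{\peso{1}{\lambda_2}} = \N^4_{\peso{1}{\lambda_2}} - q\,\N^5_{\peso{0}{\lambda_2}}$ and one substitutes $\N^5_{\peso{0}{\lambda_2}}$ (residue $0$, parameter $0$), whose own expansion via the $\lambda_1 = 0, \lambda_2 \geq 1$ line of \Cref{prop: inverse decomp} introduces $\N^3$-contributions through $\N^5_{\peso{1}{\lambda_2-1}}$; iterating this, together with \Cref{lem: N6 in N5 and N3 in N2} only if needed to re-express things, yields the two $\mathcal{N}^3$-sums with signs $-q^m$ and $+q^m$ respectively.

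The bookkeeping amounts to checking two identities after substitution: that the $\mathcal{N}^4$-part telescopes to $\sum_{i=0}^{m} q^i \N^4_{\peso{\lambda_1-3i}{\lambda_2+i}}$ (or up to $m-1$ when $r=1$, since the $i=m$ term there has first coordinate $1$, which is non-dominant-reducing and instead spawns the $\mathcal{N}^3$-tail), and that the powers of $q$ in the $\mathcal{N}^3$-tail shift correctly by $q^2$ when passing from $m-1$ to $m$ (the exponent $2i-1$ for $r=0$ versus $2i$ for $r=1$ reflecting the extra factor of $q$ coming from the $\N^5_{\peso{0}{\ast}} \to \N^5_{\peso{1}{\ast}}$ step). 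I expect the main obstacle to be precisely disentangling the coupled $r=0 \leftrightarrow r=1$ recursion: the two cases refer to each other through the weight $\peso{1}{\ast}$, and one has to set up the induction so that both closed forms are proved simultaneously, being careful that the $\mathcal{N}^3$-terms generated from the $r=1$ side at parameter $m$ match exactly those appearing on the $r=0$ side, including the convention that empty sums vanish (which is what makes the $r=1$, $m=0$ case $\N^5_{\peso{1}{\lambda_2}} = \N^4_{\peso{1}{\lambda_2}} + q \N^5_{\peso{0}{\lambda_2}}$ consistent with the stated formula after one more expansion step). Everything else is a routine geometric-sum manipulation.
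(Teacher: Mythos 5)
Your proposal is correct and follows essentially the same strategy as the paper's proof: induction on $\lambda_1$ via the recursion $\N^5_\lambda = \N^4_\lambda + q\,\N^5_{\peso{\lambda_1-3}{\lambda_2+1}}$ from Proposition~\ref{prop: inverse decomp} for $\lambda_1 \geq 3$, with the $\lambda_1 \in \{0,1,2\}$ base cases handled by an inner induction on $\lambda_2$ that exploits the coupling $\N^5_{\peso{0}{\lambda_2}} \leftrightarrow \N^5_{\peso{1}{\lambda_2-1}}$. The paper streamlines the bookkeeping you worry about by naming the candidate right-hand side $\hat{\N}^5_\lambda$, observing that it satisfies the recursions $\hat{\N}^5_{\peso{0}{\lambda_2}} = \N^4_{\peso{0}{\lambda_2}} - q\hat{\N}^5_{\peso{1}{\lambda_2-1}}$ and $\hat{\N}^5_{\peso{1}{\lambda_2}} = \N^3_{\peso{1}{\lambda_2}} + q^2\hat{\N}^5_{\peso{1}{\lambda_2-1}}$ by inspection of the closed forms, and then verifying these agree with the $\N^5$-recursions via Proposition~\ref{prop: inverse decomp}; this avoids the explicit telescoping you describe and does not require Lemma~\ref{lem: N6 in N5 and N3 in N2}. (One slip: near the end you wrote $\N^5_{\peso{1}{\lambda_2}} = \N^4_{\peso{1}{\lambda_2}} + q\,\N^5_{\peso{0}{\lambda_2}}$; the sign should be $-q$, as you had correctly earlier.)
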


\begin{proof}
  Let     $ \hat{\N}^5_{\lambda} $ be the right-hand side of \eqref{eq: N5 en N4}. 
  We must show that $\hat{\N_{\lambda}^5}= \N_{\lambda}^5$ for all $\lambda \in X^+$. 
  We proceed by induction on $\lambda_1$. 
  The base cases being $\lambda_1\in \{0,1,2\}$.

A direct and easy computation shows  that $\hat{\N_{\lambda}^5}= \N_{\lambda}^5$ for $\lambda = \peso{0}{0}, \peso{1}{0}, \peso{2}{0}$. 
So that we fix $\lambda_2 \geq 1$ and assume that  $\hat{\N_{\lambda}^5}= \N_{\lambda}^5$ for all $\mu =\peso{\mu_1}{\mu_2} $ with $\mu_1\in \{0,1,2\}$ and $\mu_2<\lambda_2$. 
By definition of the $\hat{\N}$-elements we have
\begin{equation} \label{eq: N5 en N4 base Induction}
\hat{\N}_{\lambda}^5 = \left\{   \begin{array}{ll}
    \N_{\lambda}^4 - q\hat{\N}_{\peso{1}{\lambda_2-1} }^5 & \mbox{if } \lambda_1=0; \\
 \N_{\lambda}^3 +q^2\hat{\N}_{\peso{1}{\lambda_2-1} }^5     & \mbox{if } \lambda_1=1; \\
   \N_{\lambda}^4  & 
   \mbox{if } \lambda_1=2. \\
\end{array} \right. 
\end{equation}

Now using our inductive hypothesis we have $\hat{\N}_{\peso{1}{\lambda_2-1} }^5 =\N_{\peso{1}{\lambda_2-1} }^5$. 
Then, using Proposition \ref{prop: inverse decomp} to write $\N_{\lambda}^3$ and $\N_{\lambda}^4$ in terms of $\mathcal{N}^5$, we see that in the three cases the right-hand side of \eqref{eq: N5 en N4 base Induction} reduces to $\N_{\lambda}^5$. 
This completes the proof of our base cases. 

We now fix  $\lambda_1\geq 3$ and suppose that $\hat{\N_{\mu}^5}= \N_{\mu}^5$ for all $\mu=\peso{\mu_1}{\mu_2}$  such that $\mu_1<\lambda_1$.  
We have
\begin{equation}
    \hat{\N}_{\lambda}^5 = \N_{\lambda}^4+q\hat{\N}_{\peso{\lambda_1-3}{\lambda_2+1}}^5=\N_{\lambda}^4+q\N_{\peso{\lambda_1-3}{\lambda_2+1}}^5 =\N_{\lambda}^5, 
\end{equation}
where the first equality comes from the definition of the $\hat{\N}$-elements, the second by our inductive hypothesis and the third by Proposition \ref{prop: inverse decomp}.
This finishes our induction and the proof of the lemma. 
\end{proof}

\begin{remark}
The alert reader may have noticed the appearance of ${\N}^3$–elements in the expansions of Lemma \ref{lemma N5 enN4}. 
Although one might expect only $\N^4$–terms, including $\N^3$-term makes the expressions more compact and will be essential in the next section, where we prove that the canonical basis elements admit an atomic decomposition.
If one wishes to express an $\N^5$–element solely in terms of the $\mathcal{N}^4$–basis, it is enough to combine Proposition \ref{prop: inverse decomp} with Lemma \ref{lemma N5 enN4}. 
In particular, such an expansion generally involves negative coefficients.     
\end{remark}

Lastly, we describe the expansion of $\mathcal{N}^4$ into $\mathcal{N}^3$.

\begin{lemma} \label{lemma N4 enN3}
  Let $\lambda = \peso{\lambda_1}{\lambda_2}\in X^+$ and write $\lambda_2=2m+r$ with $m\geq 0$ and $r\in \{0,1\}$. 
  Then, we have 
\begin{equation}\label{eq: N4 en N3}
  \displaystyle   \N_{\lambda}^4 = 
  \sum_{i=0}^{\lambda_1}  q^i \N^3_{\peso{\lambda_1-i}{\lambda_2}}+ q^{\lambda_1}\sum_{i=1}^{m} q^{4i-2} \left( \N^3_{\peso{2}{\lambda_2-2i}}+q \N^3_{\peso{1}{\lambda_2-2i}}+q^2 \N^3_{\peso{0}{\lambda_2-2i}} \right).
\end{equation}
\end{lemma}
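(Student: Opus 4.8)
The plan is to prove \eqref{eq: N4 en N3} by inverting the relevant cases of \Cref{prop: inverse decomp}, exactly in the spirit of the proofs of \Cref{lem: N6 in N5 and N3 in N2} and \Cref{lemma N5 enN4}. Concretely, let $\hat{\N}^4_\lambda$ denote the right-hand side of \eqref{eq: N4 en N3}, and show $\hat{\N}^4_\lambda = \N^4_\lambda$ for all $\lambda\in X^+$ by induction. Since the inverse decomposition of $\mathcal{N}^3$ inside $\mathcal{N}^4$ in \Cref{prop: inverse decomp} subtracts $\peso{1}{0}$ (i.e. decreases $\lambda_1$ by one) when $\lambda_1\ge 1$, while the case $\lambda_1=0$ involves the term $-q^2\N^4_{\peso{2}{\lambda_2-2}}$ (decreasing $\lambda_2$ by two), the natural induction is a double induction ordered first by $\lambda_1$ and then, within $\lambda_1=0$, by $\lambda_2$.

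First I would treat the inductive step $\lambda_1\ge 1$. From the definition of $\hat{\N}^4$ one reads off the recursion $\hat{\N}^4_{\peso{\lambda_1}{\lambda_2}} = \N^3_{\peso{\lambda_1}{\lambda_2}} + q\,\hat{\N}^4_{\peso{\lambda_1-1}{\lambda_2}}$, because decreasing $\lambda_1$ by one shifts every index in the first sum and multiplies the whole second sum by $q$. By the inductive hypothesis $\hat{\N}^4_{\peso{\lambda_1-1}{\lambda_2}}=\N^4_{\peso{\lambda_1-1}{\lambda_2}}$, and by \Cref{prop: inverse decomp} (the case $\lambda_1\ge 1$ of $\N^3_\lambda$ in $\mathcal{N}^4$) we have $\N^3_{\peso{\lambda_1}{\lambda_2}} = \N^4_{\peso{\lambda_1}{\lambda_2}} - q\,\N^4_{\peso{\lambda_1-1}{\lambda_2}}$; adding these gives $\hat{\N}^4_{\peso{\lambda_1}{\lambda_2}}=\N^4_{\peso{\lambda_1}{\lambda_2}}$, as desired.

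Next I would handle the base column $\lambda_1=0$, by an inner induction on $\lambda_2$. For $\lambda_2\in\{0,1\}$ the right-hand side of \eqref{eq: N4 en N3} is just $\N^3_{\peso{0}{\lambda_2}}$ (the second sum being empty), and \Cref{prop: inverse decomp} gives $\N^3_{\peso{0}{\lambda_2}}=\N^4_{\peso{0}{\lambda_2}}$ directly (the ``else'' branch), settling the base cases. For $\lambda_2\ge 2$, extracting the $i=0$ term of the first sum and the $i=1$ term of the second sum of $\hat{\N}^4_{\peso{0}{\lambda_2}}$, and comparing the remainder with $\hat{\N}^4_{\peso{2}{\lambda_2-2}}$ after peeling off its first sum, one obtains a relation of the shape $\hat{\N}^4_{\peso{0}{\lambda_2}} = \N^3_{\peso{0}{\lambda_2}} + q^2\,\hat{\N}^4_{\peso{2}{\lambda_2-2}}$. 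Here the already-established case $\lambda_1\ge 1$ applies to $\peso{2}{\lambda_2-2}$, giving $\hat{\N}^4_{\peso{2}{\lambda_2-2}}=\N^4_{\peso{2}{\lambda_2-2}}$, and then \Cref{prop: inverse decomp} (the case $\lambda_1=0,\ \lambda_2\ge 2$ of $\N^3_\lambda$ in $\mathcal{N}^4$) gives $\N^3_{\peso{0}{\lambda_2}} = \N^4_{\peso{0}{\lambda_2}} - q^2\,\N^4_{\peso{2}{\lambda_2-2}}$, so the two terms combine to $\N^4_{\peso{0}{\lambda_2}}$.

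The only real care needed is bookkeeping: verifying the two claimed recursions for $\hat{\N}^4$ amounts to re-indexing the finite sums in \eqref{eq: N4 en N3} and checking that the $q$-powers $q^i$, $q^{\lambda_1}$, and $q^{4i-2}$ shift correctly — in particular that in the $\lambda_1=0$ step the ``missing'' first-sum tail of $\hat{\N}^4_{\peso{0}{\lambda_2}}$ is exactly $q^2$ times the first sum of $\hat{\N}^4_{\peso{2}{\lambda_2-2}}$, and that the parities/residues $\lambda_2=2m+r$ are consistent between $\peso{0}{\lambda_2}$ and $\peso{2}{\lambda_2-2}$ (they share the same $r$, with $m$ decreasing by one). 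I expect this index-chasing to be the main, though entirely routine, obstacle; no new idea beyond \Cref{prop: inverse decomp} is required.
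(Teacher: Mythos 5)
Your proposal uses the same two recursions as the paper's proof, namely $\hat{\N}^4_{\peso{\lambda_1}{\lambda_2}} = \N^3_{\peso{\lambda_1}{\lambda_2}} + q\,\hat{\N}^4_{\peso{\lambda_1-1}{\lambda_2}}$ for $\lambda_1\ge 1$ and $\hat{\N}^4_{\peso{0}{\lambda_2}} = \N^3_{\peso{0}{\lambda_2}} + q^2\,\hat{\N}^4_{\peso{2}{\lambda_2-2}}$ for $\lambda_2\ge 2$, and the same reduction to Proposition~\ref{prop: inverse decomp}; the content is correct and the index-chasing you anticipate does work out. The only issue is the stated organization of the induction. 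Ordering ``first by $\lambda_1$'' is not a well-founded order for this recursion: the $\lambda_1=0$ step sends you to $\peso{2}{\lambda_2-2}$, which has $\lambda_1=2>0$, so at the moment you invoke ``the already-established case $\lambda_1\ge 1$'' for $\peso{2}{\lambda_2-2}$, that case is not yet unconditionally established — your step for $\lambda_1\ge 1$ is only an implication whose unwinding passes back through $\peso{0}{\lambda_2-2}$. The dependencies do terminate because each pass through the $\lambda_1=0$ case strictly decreases $\lambda_2$, but the induction should be presented with $\lambda_2$ as the primary variable (e.g.\ lexicographically on $(\lambda_2,\lambda_1)$, or on $\lambda_1+2\lambda_2$). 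This is precisely what the paper does: an outer induction on $\lambda_2$ (split by parity) with an inner induction on $\lambda_1$. With that reorganization your argument coincides with the paper's.
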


\begin{proof}
    Let $\hat{\N}^4_{\lambda}$ be the right-hand side of \eqref{eq: N4 en N3}. 
    We must show that $\hat{\N}_{\lambda}^4 =\N_{\lambda}^4  $ for all $\lambda =\peso{\lambda_1}{\lambda_2}\in X^+$. 
    We proceed by induction on $\lambda_2$. 
    Let us further assume that $\lambda_2$ is even.
    Our base case is $\lambda_2=0$. 
    Notice that in this case the second row of the expression of $\N_{\lambda}^3$ in $\mathcal{N}^4$ in Proposition \ref{prop: inverse decomp} is never required. 
    On the other hand, since $m=0$ in this case, the element $\hat{\N}_\lambda^4$ reduces to 
    \begin{equation}
        \hat{\N}_\lambda^4 = \sum_{i=0}^{\lambda_1}  q^i \N^3_{\peso{\lambda_1-i}{\lambda_2}}.
    \end{equation}
    Therefore, we are essentially in the same situation of Lemma \ref{lem: N6 in N5 and N3 in N2}.
    Thus, an easy inductive argument on $\lambda_1$ shows that $\hat{\N}_{\lambda}^4 =\N_{\lambda}^4  $. 

    We now assume that $\lambda_2 \geq 2$ and that the result holds for all even numbers less than $\lambda_2$. 
    Let us suppose that $\lambda_1=0$. 
    By the definition of the $\hat{\N}^4$-elements, our inductive hypothesis and Proposition \ref{prop: inverse decomp} we obtain
    \begin{equation}
       \hat{\N}_\lambda^4 = \N_\lambda^3 +q^2   \hat{\N}_{\peso{2}{\lambda_2-2}}^4 = \N_\lambda^3 +q^2   {\N}_{\peso{2}{\lambda_2-2}}^4 =  {\N}_\lambda^4, 
    \end{equation}
    as we wanted to show. 
    
    Similarly, for $\lambda_1>0$ we have
    \begin{equation}
       \hat{\N}_\lambda^4 = \N_\lambda^3 +q   \hat{\N}_{\peso{\lambda_1-1}{\lambda_2}}^4 = \N_\lambda^3 +q^2   {\N}_{\peso{2}{\lambda_2-2}}^4 =  {\N}_\lambda^4. 
    \end{equation}
    This completes the proof of $\hat{\N}_{\lambda}^4 =\N_{\lambda}^4  $ when $\lambda_2$ is even. 
    The case when $\lambda_2$ is odd is entirely analogous and for this reason we omit it.    
\end{proof}

\subsection{Atomic Decomposition}

In this section we prove that the elements of the canonical basis $\HH{\lambda} =\N_{\lambda}^{6}$ have an atomic decomposition for all $\lambda\in X^{+}$. 

\begin{proof}[Proof of \Cref{thr:atomic}]
By \lemsthree{lem: N6 in N5 and N3 in N2}{lemma N5 enN4}{lemma N4 enN3}, it suffices to show that 
$\N^5_{\lambda}$ has an atomic decomposition whenever $\lambda_1\equiv 0 \pmod{3}$.
We first treat the case $\lambda_1=0$. 
If $\lambda_2=0$ then $\N^5_{\lambda}=\N^4_{\lambda}$ and there is nothing to prove. 
Thus we assume $\lambda_2\ge1$.  
By Lemma \ref{lemma N5 enN4} we have
\begin{equation}\label{eq:atomica1}
\N^5_{\peso{0}{\lambda_2}}
=
\N^4_{\peso{0}{\lambda_2}}
-
\sum_{i=1}^{\lambda_2} q^{2i-1}\,\N^3_{\peso{1}{\lambda_2-i}}.
\end{equation}

We now distinguish the parity of $\lambda_2$.
Assume  that $\lambda_2=2m$ is even.  
(The odd case is analogous and omitted.)
Using $\lambda_2=2m$, equation~\eqref{eq:atomica1} becomes
\begin{equation}\label{eq:atomic2}
\N^5_{\peso{0}{\lambda_2}}
=
\N^4_{\peso{0}{\lambda_2}}
-
\sum_{i=1}^{m} q^{4i-1}\,\N^3_{\peso{1}{\lambda_2-2i}}
-
\sum_{i=1}^{m} q^{4i-3}\,\N^3_{\peso{1}{\lambda_2-2i+1}}.
\end{equation}

On the other hand, Lemma~\ref{lemma N4 enN3} yields
\begin{equation}\label{eq:atomic3}
\N^4_{\peso{0}{\lambda_2}}
=
\N^3_{\peso{0}{\lambda_2}}
+
\sum_{i=1}^{m} q^{4i-2}
\Bigl(
\N^3_{\peso{2}{\lambda_2-2i}}
+
q\,\N^3_{\peso{1}{\lambda_2-2i}}
+
q^2\,\N^3_{\peso{0}{\lambda_2-2i}}
\Bigr).
\end{equation}

Substituting \eqref{eq:atomic3} into \eqref{eq:atomic2} and regrouping terms gives
\begin{equation}\label{eq:atomic4}
\N^5_{\peso{0}{\lambda_2}}
=
\sum_{i=0}^{m} q^{4i}\,\N^3_{\peso{0}{\lambda_2-2i}}
-
\sum_{i=1}^{m} q^{4i-3}\,\N^2_{\peso{1}{\lambda_2-2i+1}},
\end{equation}
where Proposition~\ref{prop: inverse decomp} has been used to rewrite some differences of two $\N^3$–terms as a $\N^2$–term.

We now apply Lemma~\ref{lem: N6 in N5 and N3 in N2} to rewrite each $\N^3$ in terms of $\N^2$. 
After a reordering of the terms, we obtain 
\begin{equation} 
\begin{array}{rl} \N_{\lambda}^5 &= \displaystyle\sum_{i=0}^m \sum_{j=0}^{\lambda_2-2i} q^{4i+j} \N_{\peso{j}{\lambda_2-2i+j}}^2 - \sum_{i=1}^{m} q^{4i-3} \N_{\peso{1}{\lambda_2-2i+1}}^2 \\ & = \displaystyle \sum_{i=0}^m q^{4i}\N_{\peso{0}{\lambda_2,2i}}^2 +\sum_{i=1}^m\sum_{j=1}^{\lambda_2 -2i+1} q^{4i-2} \N_{\peso{j+1}{\lambda_2-2i-j+1}}^2. 
\end{array} 
\end{equation}
which is manifestly an atomic decomposition.

\smallskip

We now treat the case $\lambda_1\ge3$ with $\lambda_1\equiv 0 \pmod{3}$. 
Suppose inductively that $\N^5_{\peso{\mu_1}{\mu_2}}$ has an atomic decomposition for all $\mu_1<\lambda_1$.  
By Proposition~\ref{prop: inverse decomp},
\begin{equation}\label{eq:atomic6}
\N^5_{\lambda}
=
\N^4_{\lambda}
+
q\,\N^5_{\peso{\lambda_1-3}{\lambda_2+1}}.
\end{equation}
The element $\N^4_{\lambda}$ has an atomic decomposition by Lemma~\ref{lem: N6 in N5 and N3 in N2} together with Lemma~\ref{lemma N4 enN3}, and the second term in \eqref{eq:atomic6} does so by the inductive hypothesis. 
Hence $\N^5_{\lambda}$ has an atomic decomposition as well, completing the proof.
\end{proof}

\section{Second approach}

In this section we provide an alternative proof of the atomicity of Kazhdan--Lusztig basis elements. 
As already noted, the step-by-step decompositions for the pre-canonical bases are not necessarily positive in general. 
We show that a slight modification of the definition of the pre-canonical bases restores positivity at each layer. 


\begin{definition} \label{def:adjusted pre-can}
    Let $\gamma_i$ be the unique root of height $i$ for $i\in[2,5]$ and
   \begin{equation}
       \begin{array}{l}
 X_5=\{\peso{\lambda_1}{\lambda_2} \mid \lambda_2\ge 1\}\\[2pt]
 X_4=\{\peso{\lambda_1}{\lambda_2}\mid \lambda_1\geq 3\}\\[2pt]
 X_3=\{\peso{\lambda_1}{\lambda_2}\mid \lambda_1\geq 2\}\\[2pt]
 X_2=\{\peso{\lambda_1}{\lambda_2}\mid \lambda_1\geq 2, \lambda_2\geq 1\}.
       \end{array}
   \end{equation}

    We then define the \newword{adjusted pre-canonical bases} inductively by setting
    $\widetilde{\N}^6_{\lambda}={\N}^6_{\lambda}=\HH{\lambda}$ and for $k\in [2,5]$ and $\lambda\in X^+$ we set
    \[\widetilde{\N}^k_{\lambda}=\begin{cases}
    \widetilde{\N}^{k+1}_{\lambda}- q \widetilde{\N}^{k+1}_{\lambda-\gamma_k}, & \text{if }\lambda \in X_k;\\
    \widetilde{\N}^{k+1}_{\lambda}, & \text{otherwise.}\\
\end{cases}\]
\end{definition}

\begin{remark}
Note that $\lambda\in X_k$ implies that $\lambda-\gamma_k\in X^+$. Thus, the adjusted bases are well-defined. 
\end{remark}

For the adapted pre-canonical bases we  immediately have
\[\widetilde{\N}^{k+1}_{\lambda}\in  \sum_{\mu \leq \lambda} \mathbb{N}[q]  \widetilde{\N}^{k}_{\mu}. \]
It follows that $\HH{\lambda} =\widetilde{\N}^6_{\lambda} $ is positive when written in terms of $\{ \widetilde{\N}_{\mu}^2 \mid \mu \in X^+ \}$. 
Thus, in order to obtain the atomicity of $\HH{\lambda}$, we must compute $\widetilde{\N}^{2}_{\lambda}$.

We now extend the definition of the sets $X_i$ in Definition \ref{def:adjusted pre-can} for subsets $I$ of $[2,5]$.

\begin{definition}
    Let $I\subset [2,5]$. 
    If $I=\emptyset $ then we define $X_{I}=X^+$. 
    If $I=\{i\}$ then we set $X_I=X_{i}$. 
    Finally, if $|I|>1$ we define 
    $$X_I=\{\lambda\in X_{i_0}\mid \lambda-\gamma_{i_0}\in X_{I\setminus\{{i_0}\}}\}, $$
    where $i_0=\min I$.
\end{definition}

In \Cref{tab:XI} we list the explicit conditions for a weight 
$\lambda = \peso{\lambda_1}{\lambda_2}$ to belong to a given set $X_I$.

\begin{table}[ht]
  \centering
  \renewcommand{\arraystretch}{1.2}
  \begin{tabular}{|c|c|l|l|l|}
    \hline
    $|I|$ & $I$ & $\lambda-\Gamma_{I}$
          &  $\lambda\in X_I$
          &  $\lambda-\Gamma_{I}\in X^+$  \\
    \hline \hline
    0 & $\emptyset $ & $(\lambda_1,\lambda_2)_\varpi$
      &
      & \\ 
    \hline\hline
    \multirow{4}{*}{1}
      & $\{2\}$ & $(\lambda_1+1,\lambda_2-1)_\varpi$
      & if $\lambda_1>1$ and $\lambda_2>0$
      & if $\lambda_2>0$ \\\cline{2-5}
      & $\{3\}$ & $(\lambda_1-1,\lambda_2)_\varpi$
      & if $\lambda_1>1$
      & if $\lambda_1>0$ \\\cline{2-5}
      & $\{4\}$ & $(\lambda_1-3,\lambda_2+1)_\varpi$
      & if $\lambda_1>2$
      & if $\lambda_1>2$ \\\cline{2-5}
      & $\{5\}$ & $(\lambda_1,\lambda_2-1)_\varpi$
      & if $\lambda_2>0$
      & if $\lambda_2>0$ \\
    \hline\hline
    \multirow{6}{*}{2}
      & $\{2,3\}$ & $(\lambda_1,\lambda_2-1)_\varpi$
      & if $\lambda_1>1$ and $\lambda_2>0$
      & if $\lambda_2>0$ \\\cline{2-5}
      & $\{2,4\}$ & $(\lambda_1-2,\lambda_2)_\varpi$
      & if $\lambda_1>1$ and $\lambda_2>0$
      & if $\lambda_1>1$ \\\cline{2-5}
      & $\{2,5\}$ & $(\lambda_1+1,\lambda_2-2)_\varpi$
      & if $\lambda_1>1$ and $\lambda_2>1$
      & if $\lambda_2>1$ \\\cline{2-5}
      & $\{3,4\}$ & $(\lambda_1-4,\lambda_2+1)_\varpi$
      & if $\lambda_1>3$
      & if $\lambda_1>3$ \\\cline{2-5}
      & $\{3,5\}$ & $(\lambda_1-1,\lambda_2-1)_\varpi$
      & if $\lambda_1>1$ and $\lambda_2>0$
      & if $\lambda_1>0$ and $\lambda_2>0$ \\\cline{2-5}
      & $\{4,5\}$ & $(\lambda_1-3,\lambda_2)_\varpi$
      & if $\lambda_1>2$
      & if $\lambda_1>2$ \\
    \hline\hline
    \multirow{4}{*}{3}
      & $\{2,3,4\}$ & $(\lambda_1-3,\lambda_2)_\varpi$
      & if $\lambda_1>2$ and $\lambda_2>0$
      & if $\lambda_1>2$ \\\cline{2-5}
      & $\{2,3,5\}$ & $(\lambda_1,\lambda_2-2)_\varpi$
      & if $\lambda_1>1$ and $\lambda_2>1$
      & if $\lambda_2>1$ \\\cline{2-5}
      & $\{2,4,5\}$ & $(\lambda_1-2,\lambda_2-1)_\varpi$
      & if $\lambda_1>1$ and $\lambda_2>0$
      & if $\lambda_1>1$ and $\lambda_2>0$ \\\cline{2-5}
      & $\{3,4,5\}$ & $(\lambda_1-4,\lambda_2)_\varpi$
      & if $\lambda_1>3$
      & if $\lambda_1>3$ \\
    \hline\hline
    4 & $\{2,3,4,5\}$
      & $(\lambda_1-3,\lambda_2-1)_\varpi$
      & if $\lambda_1>2$ and $\lambda_2>0$
      & if $\lambda_1>2$ and $\lambda_2>0$ \\
    \hline
  \end{tabular}
  \caption{Conditions for $\lambda\in X_I$ and for $\lambda-\Gamma_I \in X^+$.}
  \label{tab:XI}
\end{table}

For a dominant weight $\lambda$ and $k\in [2,6]$ we define
\begin{equation}
U_\lambda^k = \{ I\subset [2,5] \mid \lambda \in X_I \mbox{ and } k\leq \min I  \},
\end{equation}
where we make the convention that $\min \emptyset \coloneqq \infty$. 
Furthermore, for $I\subset [2,5]$ we set $\Gamma_I= \sum_{i\in I}\gamma_i$.

With all these definitions at hand, we are in position to express the elements of the adjusted pre-canonical bases in terms of the canonical basis. 

\begin{lemma} \label{lem:adjusted-in-can}
    Let $\lambda \in X^+$ and $k\in [2,6]$. 
    Then, we have 
        \begin{equation}\label{eq:adjusted-in-can}
         \widetilde{\N}^{k}_{\lambda}=\sum_{I\in U_\lambda^k} (-q)^{|I|}\HH{\lambda-\Gamma_I}.
    \end{equation}
\end{lemma}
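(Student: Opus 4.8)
The statement is an explicit closed formula for the adjusted pre-canonical basis elements in terms of the Kazhdan--Lusztig basis, so the natural approach is downward induction on $k$, from $k=6$ to $k=2$. The base case $k=6$ is immediate: $U_\lambda^6$ contains only $I=\emptyset$ (since no nonempty $I\subset[2,5]$ has $\min I\geq 6$), and $\widetilde{\N}^6_\lambda=\HH{\lambda}=\sum_{I\in U_\lambda^6}(-q)^{|I|}\HH{\lambda-\Gamma_I}$. So I would fix $k\in[2,5]$, assume the formula holds at level $k+1$ for all dominant weights, and prove it at level $k$.

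\textbf{The inductive step.} By Definition~\ref{def:adjusted pre-can}, there are two cases. If $\lambda\notin X_k$, then $\widetilde{\N}^k_\lambda=\widetilde{\N}^{k+1}_\lambda$; I must check that $U_\lambda^k=U_\lambda^{k+1}$ in this case. Indeed $U_\lambda^{k+1}\subseteq U_\lambda^k$ always, and any $I\in U_\lambda^k\setminus U_\lambda^{k+1}$ would have $\min I=k$, forcing $k\in I$ and $\lambda\in X_I\subseteq X_k$ (the containment $X_I\subseteq X_{\min I}$ being immediate from the definition of $X_I$), contradicting $\lambda\notin X_k$. If instead $\lambda\in X_k$, then $\widetilde{\N}^k_\lambda=\widetilde{\N}^{k+1}_\lambda-q\,\widetilde{\N}^{k+1}_{\lambda-\gamma_k}$. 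I apply the inductive hypothesis to both terms. The first gives $\sum_{I\in U_\lambda^{k+1}}(-q)^{|I|}\HH{\lambda-\Gamma_I}$. For the second, note $\lambda-\gamma_k\in X^+$ (by the remark following Definition~\ref{def:adjusted pre-can}), so the hypothesis gives $-q\sum_{J\in U_{\lambda-\gamma_k}^{k+1}}(-q)^{|J|}\HH{\lambda-\gamma_k-\Gamma_J}=\sum_{J\in U_{\lambda-\gamma_k}^{k+1}}(-q)^{|J|+1}\HH{\lambda-\Gamma_{J\cup\{k\}}}$, using that $k\notin J$ (since $\min J\geq k+1$) so $\Gamma_{J\cup\{k\}}=\gamma_k+\Gamma_J$. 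Writing $I=J\cup\{k\}$, the second sum ranges over sets $I\ni k$ with $\min I=k$ and $I\setminus\{k\}\in U_{\lambda-\gamma_k}^{k+1}$, contributing $(-q)^{|I|}\HH{\lambda-\Gamma_I}$.

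\textbf{The combinatorial identity to close the induction.} It remains to verify the bijection $U_\lambda^k = U_\lambda^{k+1}\sqcup\{I\ni k:\ \min I=k,\ I\setminus\{k\}\in U_{\lambda-\gamma_k}^{k+1}\}$ when $\lambda\in X_k$. The disjoint union on the right is exactly $\{I\in U_\lambda^k: k\notin I\}\sqcup\{I\in U_\lambda^k:k\in I\}$, so the point is: (i) $I\in U_\lambda^k$ with $k\notin I$ $\iff$ $I\in U_\lambda^{k+1}$ --- this holds since $k\notin I$ and $\min I\geq k$ together force $\min I\geq k+1$, and $\lambda\in X_I$ is the common requirement; (ii) $I\in U_\lambda^k$ with $k\in I$ $\iff$ $\min I=k$ and $I\setminus\{k\}\in U_{\lambda-\gamma_k}^{k+1}$ --- here $k\in I$ and $\min I\geq k$ give $\min I=k$, and the definition $X_I=\{\mu\in X_{i_0}\mid \mu-\gamma_{i_0}\in X_{I\setminus\{i_0\}}\}$ with $i_0=k$ says precisely that $\lambda\in X_I$ is equivalent to $\lambda\in X_k$ (true since we are in the case $\lambda\in X_k$) together with $\lambda-\gamma_k\in X_{I\setminus\{k\}}$, i.e. $I\setminus\{k\}\in U_{\lambda-\gamma_k}^{k+1}$ (the condition $\min(I\setminus\{k\})\geq k+1$ being automatic). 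The only subtlety is handling the edge case $I=\{k\}$, where $I\setminus\{k\}=\emptyset$ and one uses the convention $X_\emptyset=X^+$ together with $\lambda-\gamma_k\in X^+$. I expect this bookkeeping with the conventions $\min\emptyset=\infty$ and $X_\emptyset=X^+$ to be the only place where care is needed; the rest is a routine unwinding of definitions.
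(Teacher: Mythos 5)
Your proof is correct and follows essentially the same route as the paper: downward induction on $k$ from the base case $k=6$, using the recursive definition of $\widetilde{\N}^k_\lambda$ to reduce to level $k+1$ and then matching index sets via the bijection $U_\lambda^k = U_\lambda^{k+1}\sqcup\{I : k\in I,\ I\setminus\{k\}\in U_{\lambda-\gamma_k}^{k+1}\}$. The only difference is that the paper's proof compresses the case split and the combinatorial identity into a single line (the last equality in its chain of equations), whereas you spell out the bijection and its edge cases explicitly; your version is a more careful writeup of the same argument.
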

\begin{proof}
    We proceed by downward induction. 
    The base case $k=6$ is clear since $U_\lambda^6=\{\emptyset\}$. 
    Therefore, $\widetilde{\N}^{6}_{\lambda}=\HH{\lambda}$ as predicted by  \eqref{eq:adjusted-in-can}.
    We now fix $2\leq k \leq 5$ and assume that \eqref{eq:adjusted-in-can} holds for $k+1$. 
    We have
    \begin{equation}
        \begin{array}{rl}
            \widetilde{\N}_\lambda^k & \displaystyle = \sum_{\substack{ J\subset \{k\}\\\lambda \in X_J  }} (-q)^{|J|} \widetilde{\N}^{k+1}_{\lambda -\Gamma_J}   \\[10pt]
            &   \\
             & \displaystyle = \sum_{\substack{ J\subset \{k\}\\\lambda \in X_J  }} (-q)^{|J|} \sum_{I\in U_{\lambda-\Gamma_J}^{k+1}} (-q)^{|I|} \HH{\lambda-\Gamma_J-\Gamma_I}  \\[10pt]
             & \\
              & \displaystyle = \sum_{\substack{ J\subset \{k\}\\\lambda \in X_J  }}\sum_{I\in U_{\lambda-\Gamma_J}^{k+1}}  (-q)^{|J|+|I|}  \HH{\lambda-\Gamma_{J\cup I}}  \\[10pt]
             &  \\
             & \displaystyle =\sum_{K\in U_\lambda^k} (-q)^{|K|}\HH{\lambda-\Gamma_K},
        \end{array}
    \end{equation}
    as we wanted to show. 
\end{proof}

\begin{cor} \label{cor:adjusted-N2}
    Let $\lambda \in X^+$. 
    Then, we have 
    \begin{equation} \label{eq:adjustedN2}
        \widetilde{\N}_{\lambda}^2 = \sum_{\substack{I\subset [2,5]\\ \lambda \in X_I}}  (-q)^{|I|} \HH{\lambda - \Gamma_I}.
    \end{equation}
\end{cor}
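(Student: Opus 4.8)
The plan is to derive \Cref{cor:adjusted-N2} as the special case $k=2$ of \Cref{lem:adjusted-in-can}. First I would observe that by \Cref{lem:adjusted-in-can} with $k=2$ we have
\[
\widetilde{\N}_{\lambda}^2 = \sum_{I\in U_\lambda^2} (-q)^{|I|}\HH{\lambda-\Gamma_I},
\]
so the only thing to check is that the index set $U_\lambda^2$ coincides with $\{I\subset[2,5]\mid \lambda\in X_I\}$.

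Recalling the definition $U_\lambda^k = \{ I\subset [2,5] \mid \lambda \in X_I \text{ and } k\leq \min I\}$, for $k=2$ the condition $2\le \min I$ is automatic: every nonempty $I\subset[2,5]$ satisfies $\min I\ge 2$, and for $I=\emptyset$ we have $\min\emptyset=\infty\ge 2$ by our convention. Hence the constraint $k\le\min I$ imposes nothing when $k=2$, and $U_\lambda^2=\{I\subset[2,5]\mid\lambda\in X_I\}$. Substituting this identification into the displayed formula from \Cref{lem:adjusted-in-can} gives exactly \eqref{eq:adjustedN2}.

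I do not expect any real obstacle here: the corollary is essentially a restatement of the lemma once one unwinds the convention $\min\emptyset=\infty$. The only point deserving a word is precisely that the empty set is included in the sum (it contributes the term $\HH{\lambda}$), which is consistent with the convention $X_\emptyset=X^+$ so that $\lambda\in X_\emptyset$ always holds. Thus the proof is a single short paragraph invoking \Cref{lem:adjusted-in-can} at $k=2$ and noting that the side condition $k\le\min I$ becomes vacuous.

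\begin{proof}
    This is the case $k=2$ of \Cref{lem:adjusted-in-can}. Indeed, for any $I\subset[2,5]$ we have $\min I\ge 2$ when $I\neq\emptyset$, while $\min\emptyset=\infty\ge 2$ by convention; hence the condition $2\le\min I$ in the definition of $U_\lambda^2$ is always satisfied, and therefore $U_\lambda^2=\{I\subset[2,5]\mid \lambda\in X_I\}$. Substituting this into \eqref{eq:adjusted-in-can} yields \eqref{eq:adjustedN2}.
\end{proof}
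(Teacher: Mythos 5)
Your proof is correct and follows the paper's argument exactly: both apply \Cref{lem:adjusted-in-can} at $k=2$ and observe that the condition $k\le\min I$ is vacuous there (with the convention $\min\emptyset=\infty$), so that $U_\lambda^2=\{I\subset[2,5]\mid\lambda\in X_I\}$. You merely spell out this last identification in slightly more detail than the paper does.
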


\begin{proof}
    This follows from a direct application of Lemma \ref{lem:adjusted-in-can} with $k=2$, once we observe that $U_{\lambda}^2=\{ I\subset [2,5] \mid \lambda \in X_I \}$.
\end{proof}

\begin{prop}\label{prop: N2s}
Let $\lambda = \peso{\lambda_1}{\lambda_2} \in X^+$. 
Then we have
\begin{equation} \label{eq:N2 adapted-vs-precan}
  \widetilde{\N}^{2}_{\lambda}-{\N}^{2}_{\lambda}   = \begin{cases}
      0,  & \mbox{if } \lambda_1\geq 3 \mbox{ or } \lambda_1+\lambda_2 <2;\\[5pt]
      q^2\widetilde{\N}^{2}_{\peso{0}{\lambda_2}}, & \mbox{if  } \lambda_1=2 \mbox{ and } \lambda_2\geq 0  ;  \\[5pt]
\displaystyle      q^2\widetilde{\N}^2_{\peso{1}{\lambda_2-1}} + \sum_{k=1}^{\lambda_2} q^k {\N}^2_{\peso{1+k}{\lambda_2-k}}, & \mbox{if  } \lambda_1=1 \mbox{ and } \lambda_2\geq 1  ;  \\[5pt]
q^4\widetilde{\N}^{2}_{\peso{0}{\lambda_2-2}} + \displaystyle \sum_{k=2}^{\lambda_2}q^k {\N}^2_{\peso{k}{\lambda_2-k}},& \mbox{if  } \lambda_1=0 \mbox{ and } \lambda_2\geq 2  .  
  \end{cases}
\end{equation}
\end{prop}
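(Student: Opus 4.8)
The plan is to track the difference $D^{k}_{\lambda}:=\widetilde{\N}^{k}_{\lambda}-\N^{k}_{\lambda}$ downward through the levels $k=6,5,4,3,2$, comparing at each level the recursion defining $\widetilde{\N}^{k}$ in Definition~\ref{def:adjusted pre-can} with the inverse step-by-step decomposition of $\N^{k}$ in Proposition~\ref{prop: inverse decomp}. The crucial initial observation is that these two recursions \emph{coincide} at level $5$: the root of height $5$ is $\gamma_{5}=3\alpha_{1}+2\alpha_{2}=\peso{0}{1}$, so $\lambda-\gamma_{5}=\peso{\lambda_1}{\lambda_2-1}$, and the triggering set $X_{5}=\{\peso{\lambda_1}{\lambda_2}\mid\lambda_2\geq 1\}$ is exactly the case split for $\N^{5}$. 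Since $\widetilde{\N}^{6}_{\lambda}=\N^{6}_{\lambda}=\HH{\lambda}$, this forces $D^{5}\equiv 0$, i.e. $\widetilde{\N}^{5}_{\lambda}=\N^{5}_{\lambda}$ for all $\lambda\in X^{+}$. Moreover a weight of the form $\peso{0}{\mu}$ or $\peso{1}{\mu}$ lies in none of $X_{4},X_{3},X_{2}$, hence is left untouched by the passage from level $5$ down to level $2$; thus $\widetilde{\N}^{2}_{\peso{0}{\mu}}=\N^{5}_{\peso{0}{\mu}}$ and $\widetilde{\N}^{2}_{\peso{1}{\mu}}=\N^{5}_{\peso{1}{\mu}}$, and these identities will account for the $\widetilde{\N}^{2}$-terms on the right-hand side of \eqref{eq:N2 adapted-vs-precan}.

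Next I would push $D^{5}\equiv 0$ through levels $4$ and $3$, using $\gamma_{4}=\peso{3}{-1}$ and $\gamma_{3}=\peso{1}{0}$. The sets $X_{4}=\{\lambda_1\geq 3\}$ and $X_{3}=\{\lambda_1\geq 2\}$ agree with the case split of Proposition~\ref{prop: inverse decomp} wherever $\lambda_1$ is large; the only discrepancies are the extra terms that appear on the $\N$-side when $\lambda_1\in\{0,1\}$. A direct comparison gives $D^{4}_{\lambda}=0$ for $\lambda_1\geq 2$, together with $D^{4}_{\peso{1}{\lambda_2}}=-q\,\N^{5}_{\peso{0}{\lambda_2}}$ and $D^{4}_{\peso{0}{\lambda_2}}=-q\,\N^{5}_{\peso{1}{\lambda_2-1}}$, and then $D^{3}_{\lambda}=0$ for $\lambda_1\geq 3$, $D^{3}_{\peso{2}{\lambda_2}}=q^{2}\N^{5}_{\peso{0}{\lambda_2}}$, $D^{3}_{\peso{1}{\lambda_2}}=q^{2}\N^{5}_{\peso{1}{\lambda_2-1}}$, and $D^{3}_{\peso{0}{\lambda_2}}=-q\,\N^{5}_{\peso{1}{\lambda_2-1}}+q^{2}\N^{4}_{\peso{2}{\lambda_2-2}}$ in the relevant ranges of $\lambda_2$. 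Running the level-$2$ recursion once more, where $\widetilde{\N}^{2}$ is triggered by $X_{2}=\{\lambda_1\geq 2,\ \lambda_2\geq 1\}$ while $\N^{2}$ is triggered by $\lambda_2\geq 1$, then yields the statement. For $\lambda_1\geq 3$ everything cancels and $D^{2}_{\lambda}=0$; the three weights with $\lambda_1+\lambda_2<2$ are checked directly and also give $0$. For $\lambda_1=2$ one gets $D^{2}_{\peso{2}{\lambda_2}}=q^{2}\N^{5}_{\peso{0}{\lambda_2}}=q^{2}\widetilde{\N}^{2}_{\peso{0}{\lambda_2}}$. For $\lambda_1=1$ and $\lambda_2\geq 1$ one gets $D^{2}_{\peso{1}{\lambda_2}}=q^{2}\N^{5}_{\peso{1}{\lambda_2-1}}+q\,\N^{3}_{\peso{2}{\lambda_2-1}}$, and rewriting $\N^{3}_{\peso{2}{\lambda_2-1}}$ via Lemma~\ref{lem: N6 in N5 and N3 in N2} turns the last term into $\sum_{k=1}^{\lambda_2}q^{k}\N^{2}_{\peso{1+k}{\lambda_2-k}}$, which is the third case.

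The remaining and most delicate case is $\lambda_1=0$, $\lambda_2\geq 2$. Assembling the previous formulas, a short computation with Proposition~\ref{prop: inverse decomp} (expanding $\N^{3}_{\peso{1}{\mu}}=\N^{4}_{\peso{1}{\mu}}-q\,\N^{4}_{\peso{0}{\mu}}$ and the two $\N^{4}$-terms) reduces $D^{2}_{\peso{0}{\lambda_2}}$ to the compact but mixed expression $q^{2}\N^{5}_{\peso{2}{\lambda_2-2}}-q^{3}\N^{5}_{\peso{1}{\lambda_2-2}}$. The key point is that this combination telescopes: from $\N^{5}_{\peso{2}{\mu}}=\N^{4}_{\peso{2}{\mu}}$, $\N^{4}_{\peso{1}{\mu}}=\N^{5}_{\peso{1}{\mu}}+q\,\N^{5}_{\peso{0}{\mu}}$ and $\N^{3}_{\peso{2}{\mu}}=\N^{4}_{\peso{2}{\mu}}-q\,\N^{4}_{\peso{1}{\mu}}$ (all instances of Proposition~\ref{prop: inverse decomp}) one obtains $\N^{5}_{\peso{2}{\mu}}-q\,\N^{5}_{\peso{1}{\mu}}=\N^{3}_{\peso{2}{\mu}}+q^{2}\N^{5}_{\peso{0}{\mu}}$, whence $D^{2}_{\peso{0}{\lambda_2}}=q^{2}\N^{3}_{\peso{2}{\lambda_2-2}}+q^{4}\N^{5}_{\peso{0}{\lambda_2-2}}$. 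Recognizing $\N^{5}_{\peso{0}{\lambda_2-2}}=\widetilde{\N}^{2}_{\peso{0}{\lambda_2-2}}$ and expanding $q^{2}\N^{3}_{\peso{2}{\lambda_2-2}}=\sum_{k=2}^{\lambda_2}q^{k}\N^{2}_{\peso{k}{\lambda_2-k}}$ through Lemma~\ref{lem: N6 in N5 and N3 in N2} produces the fourth case, which completes the proof.

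The main obstacle is precisely spotting this telescoping in the $\lambda_1=0$ case: \emph{a priori} the expression $q^{2}\N^{5}_{\peso{2}{\lambda_2-2}}-q^{3}\N^{5}_{\peso{1}{\lambda_2-2}}$ mixes two distinct $\N^{5}$-elements, and unless one notices that it collapses to a single $\widetilde{\N}^{2}$-term plus a single $\N^{3}$-term, one is forced to expand the $\N^{5}$-elements all the way down via Lemmas~\ref{lemma N5 enN4} and~\ref{lemma N4 enN3}. That route works, but it requires splitting on the parity of $\lambda_2$ and much heavier index bookkeeping, so the telescoping identity above is what keeps the argument short.
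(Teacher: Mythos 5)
Your proof is correct, but it takes a genuinely different route from the paper's. The paper first proves Lemma~\ref{lem:adjusted-in-can} and its corollary, which express each adjusted element $\widetilde{\N}^{2}_{\lambda}$ as a signed sum of canonical basis elements $\HH{\lambda-\Gamma_I}$ over $I\subset[2,5]$ with $\lambda\in X_I$; it then subtracts the analogous (anti-atomic) expansion of $\N^{2}_{\lambda}$ in terms of $\tilde{\HH{}}$-elements, and cancels the discrepancy using \Cref{tab:XI} and the singularity Claim~\ref{claim: -1-is-singular}. You never leave the pre-canonical framework: you propagate the difference $D^{k}_{\lambda}=\widetilde{\N}^{k}_{\lambda}-\N^{k}_{\lambda}$ downward from $k=6$ to $k=2$, using only Definition~\ref{def:adjusted pre-can} and Proposition~\ref{prop: inverse decomp} at each layer, together with Lemma~\ref{lem: N6 in N5 and N3 in N2} to resolve the final $\N^{3}$-terms. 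I checked the intermediate formulas: $D^{5}\equiv 0$ (because $X_5$ and the inverse decomposition of $\N^5$ agree), $D^{4}_{\peso{1}{\lambda_2}}=-q\N^{5}_{\peso{0}{\lambda_2}}$, $D^{4}_{\peso{0}{\lambda_2}}=-q\N^{5}_{\peso{1}{\lambda_2-1}}$, $D^{3}_{\peso{2}{\lambda_2}}=q^{2}\N^{5}_{\peso{0}{\lambda_2}}$, $D^{3}_{\peso{1}{\lambda_2}}=q^{2}\N^{5}_{\peso{1}{\lambda_2-1}}$, $D^{3}_{\peso{0}{\lambda_2}}=-q\N^{5}_{\peso{1}{\lambda_2-1}}+q^{2}\N^{4}_{\peso{2}{\lambda_2-2}}$, and the telescoping identity $\N^{5}_{\peso{2}{\mu}}-q\N^{5}_{\peso{1}{\mu}}=\N^{3}_{\peso{2}{\mu}}+q^{2}\N^{5}_{\peso{0}{\mu}}$ — all are correct consequences of Proposition~\ref{prop: inverse decomp}, and the $\lambda_1=0$ case closes exactly as you describe. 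The trade-off: the paper's route yields a closed formula for $\widetilde{\N}^{2}_{\lambda}$ in the canonical basis (Corollary~\ref{cor:adjusted-N2}), which is a statement of independent interest and likely the right shape for generalization to other types; your route avoids Lemma~\ref{lem:adjusted-in-can}, \Cref{tab:XI}, and Claim~\ref{claim: -1-is-singular} entirely, stays within the inverse step-by-step machinery already established in Section 3, and makes the positivity of the error terms visible level by level. For a write-up you should record the $\lambda_2$-ranges where each intermediate $D^{k}$ formula applies (e.g.\ $D^{3}_{\peso{1}{\lambda_2}}=q^{2}\N^{5}_{\peso{1}{\lambda_2-1}}$ needs $\lambda_2\geq 1$), since the $\lambda_1+\lambda_2<2$ cases fall outside them and must be checked separately, as you note.
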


\begin{proof}
We first recall that using the notation in this section we have
\begin{equation}
    \N_{\lambda}^2 = \sum_{I\subset [2,5]} (-q)^{|I|} \tilde{\HH{}}_{\lambda-\Gamma_I}.
\end{equation}
Thus  Corollary \ref{cor:adjusted-N2} yields
\begin{equation} \label{eq:difN2}
    \widetilde{\N}^{2}_{\lambda}- {\N}^2_{\lambda} = \sum_{\substack{I\subset [2,5]\\ \lambda \in X_I}}  (-q)^{|I|} \HH{\lambda - \Gamma_I} - 
    \sum_{I\subset [2,5]} (-q)^{|I|} \tilde{\HH{}}_{\lambda-\Gamma_I}.
\end{equation}

We first focus in the first row of \eqref{eq:N2 adapted-vs-precan}. 
Let us suppose that $\lambda_1>3$ and $\lambda_2>1$.
We look at  \Cref{tab:XI}.
On the one hand, we have that $\lambda \in X_I$ for all $I\subset [2,5]$. 
On the other hand, we have $\lambda -\Gamma_I \in X^{+}$ for all $I\subset [2,5]$. 
Therefore, $\tilde{\HH{}}_{\lambda-\Gamma_I} = \HH{\lambda-\Gamma_I}$. 
Putting these two facts together we see that the right-hand side of \eqref{eq:difN2} vanishes. 
We conclude that  $\widetilde{\N}^{2}_{\lambda}= {\N}^2_{\lambda}$ in this case. 

For the next cases we need the following.

\begin{claim}\label{claim: -1-is-singular}
For all $k,n\in \Z$ we have
\begin{equation} 
    \begin{array}{l}
       s_1\cdot\peso{-k-1}{n}   = \peso{-k-1}{n} + k\alpha_1 = \peso{k-1}{n-k},  \\[3pt]
       s_2\cdot\peso{n}{-k-1}   =\peso{n}{-k-1} + k\alpha_2 = \peso{n-3k}{k-1}.
    \end{array}
\end{equation}
In particular,  the weights $\peso{-1}{\lambda_2}$ and $\peso{\lambda_1}{-1}$ are singular for all $\lambda_1,\lambda_2\in \Z$.
\end{claim}
\begin{proof}
   This follows directly by applying the definitions and using  $\rho=\peso{1}{1}$. 
\end{proof}
We now assume that $\lambda_{1}=3$ and $\lambda_{2}>1$. 
 \Cref{tab:XI} implies that $\lambda \in X_I$ and $\lambda -\Gamma_I \in X^{+}$ for all $I\subset [2,5]$ such that $I\neq \{3,4\}$ and $I\neq \{3,4,5\}$. 
By \eqref{eq:difN2} we obtain
\begin{equation}
    \widetilde{\N}^{2}_{\lambda}- {\N}^2_{\lambda} = -q^2 \tilde{\HH{}}_{\peso{-1}{\lambda_2+1}}   +q^{3}\tilde{\HH{}}_{\peso{-1}{\lambda_2}}=0, 
\end{equation}
where the last equality follows from Claim \ref{claim: -1-is-singular}.

The case  $\lambda_{1}>3$ and $\lambda_{2}=1$ is dealt with similarity. Indeed, there are only two sets that are neglected in \eqref{eq:adjustedN2}: $I=\{2,5 \}$ and $I=\{2,3,5 \}$. 
Furthermore, by Claim \ref{claim: -1-is-singular} the corresponding weights $\lambda-\Gamma_I$ are singular. 
This shows $\widetilde{\N}^{2}_{\lambda}- {\N}^2_{\lambda}=0$ in this case. 

We now suppose that $\lambda_1>3$ and $\lambda_2=0$. 
By combining \Cref{tab:XI} and Claim \ref{claim: -1-is-singular}, and arguing as in the previous paragraphs,  we obtain 

\begin{align*}
        \widetilde{\N}^{2}_{\peso{\lambda_1}{0}}-{\N}^{2}_{\peso{\lambda_1}{0}}
        &= -q^2\tilde{\HH{}}_{\peso{\lambda_1-2}{0}}-q^2\tilde{\HH{}}_{\peso{\lambda_1+1}{-2}}+q^3\tilde{\HH{}}_{\peso{\lambda_1-3}{0}}+q^3\tilde{\HH{}}_{\peso{\lambda_1}{-2}} \\
        & = -q^2\tilde{\HH{}}_{\peso{\lambda_1-2}{0}}+q^2\tilde{\HH{}}_{\peso{\lambda_1-2}{0}}+q^3\tilde{\HH{}}_{\peso{\lambda_1-3}{0}}-q^3\tilde{\HH{}}_{\peso{\lambda_1-3}{0}} \\
        & =0.
\end{align*}

Finally,  for $\lambda \in \{ \peso{0}{0}, \peso{0}{1}, \peso{1}{0}, \peso{3}{0}, \peso{3}{1}  \}$, a direct computation shows the equality $\widetilde{\N}^{2}_{\lambda}- {\N}^2_{\lambda}=0$.
This completes the proof of the first row of \eqref{eq:N2 adapted-vs-precan}.\\

We now move to the second row of \eqref{eq:N2 adapted-vs-precan}. 
We assume that $\lambda_1=2$ and $\lambda_2\ge1$. 
Our starting point is \eqref{eq:difN2}. 
Using \Cref{tab:XI}, we expand 
$\widetilde{\N}^{2}_{\peso{2}{\lambda_2}} - {\N}^{2}_{\peso{2}{\lambda_2}}$. 
Next, we apply Claim \ref{claim: -1-is-singular} to discard the $\tilde{\HH{}}$--terms indexed by singular weights. 
Combining these steps, we obtain
\begin{equation}
\begin{array}{rl}
    \widetilde{\N}^{2}_{\peso{2}{\lambda_2}}-{\N}^{2}_{\peso{2}{\lambda_2}}&= -q^2\tilde{\HH{}}_{\peso{-2}{\lambda_2+1}}+q^3\tilde{\HH{}}_{\peso{-2}{\lambda_2}}\\[5pt]
        &= q^2{\HH{}}_{\peso{0}{\lambda_2}}-q^3{\HH{}}_{\peso{0}{\lambda_2-1}}\\[5pt]
        &=q^2\widetilde{\N}^5_{\peso{0}{\lambda_2}}\\[5pt]
        &=q^2\widetilde{\N}^2_{\peso{0}{\lambda_2}}.
\end{array}
\end{equation}
On the other hand, a direct computations shows that $\widetilde{\N}^{2}_{\peso{2}{0}}-{\N}^{2}_{\peso{2}{0}}={\N}^{2}_{\peso{2}{0}}$. 
This finishes the proof of the second row in \eqref{eq:N2 adapted-vs-precan}.

We now focus on the third row of \eqref{eq:N2 adapted-vs-precan}. Assume that $\lambda_1=1$ and $\lambda_2\ge 1$. 
As in the previous cases, we combine \Cref{tab:XI} and Claim \ref{claim: -1-is-singular} to obtain 

\begin{equation}
\begin{array}{rl}
    \widetilde{\N}^{2}_{\peso{1}{\lambda_2}}-{\N}^{2}_{\peso{1}{\lambda_2}} =&q\tilde{\HH{}}_{\peso{2}{\lambda_2-1}}
\,    \,\,+ \pairA{q\tilde{\HH{}}_{\peso{0}{\lambda_2}}}
 \qquad   + \pairA{q\tilde{\HH{}}_{\peso{-2}{\lambda_2+1}}} 
    - \, \pairB{q^2\tilde{\HH{}}_{\peso{1}{\lambda_2-1}}}-\\[10pt]
        &q^2\tilde{\HH{}}_{\peso{2}{\lambda_2-2}}
    - \pairB{q^2\tilde{\HH{}}_{\peso{-3}{\lambda_2+1}}}
    - \pairC{ q^2\tilde{\HH{}}_{\peso{0}{\lambda_2-1}}}
   \, - \pairC{ q^2\tilde{\HH{}}_{\peso{-2}{\lambda_2}}} \,\,\,+\\[10pt]
        &q^3\tilde{\HH{}}_{\peso{-2}{\lambda_2}}
  \,\,\,  + \pairD{q^3\tilde{\HH{}}_{\peso{1}{\lambda_2-2}}}
 \,\,\,   + \pairD{q^3\tilde{\HH{}}_{\peso{-3}{\lambda_2}}}
  \,\,\,      - \,\, q^4\tilde{\HH{}}_{\peso{-2}{\lambda_2-1}}\\[10pt]
= &  q \widetilde{\N}^{5}_{\peso{2}{\lambda_2-1}} - q^3 \widetilde{\N}^{5}_{\peso{0}{\lambda_2-1}} \\ [10pt]
= &  q \widetilde{\N}^{3}_{\peso{2}{\lambda_2-1}}+q^2 \widetilde{\N}^{3}_{\peso{1}{\lambda_2-1}}- q^3 \widetilde{\N}^{3}_{\peso{0}{\lambda_2-1}} \\ [10pt]
= & \displaystyle  \sum_{k=0}^{\lambda_2-1} q^{k+1}  \widetilde{\N}^{2}_{\peso{2+k}{\lambda_2-1-k}}
+q^2 \widetilde{\N}^{2}_{\peso{1}{\lambda_2-1}}- q^3 \widetilde{\N}^{2}_{\peso{0}{\lambda_2-1}} \\ [10pt]
= & \displaystyle q  \widetilde{\N}^{2}_{\peso{2}{\lambda_2-1}}+  \sum_{k=2}^{\lambda_2} q^{k}  \widetilde{\N}^{2}_{\peso{1+k}{\lambda_2-k}}
+q^2 \widetilde{\N}^{2}_{\peso{1}{\lambda_2-1}}- q^3 \widetilde{\N}^{2}_{\peso{0}{\lambda_2-1}} \\ [10pt]
= & \displaystyle q  \N^{2}_{\peso{2}{\lambda_2-1}} + \sum_{k=2}^{\lambda_2} q^{k}  {\N}^{2}_{\peso{1+k}{\lambda_2-k}}+q^2 \widetilde{\N}^{2}_{\peso{1}{\lambda_2-1}} \\ [10pt]
= & \displaystyle  \sum_{k=1}^{\lambda_2} q^{k}  \N^{2}_{\peso{1+k}{\lambda_2-k}}
+q^2 \widetilde{\N}^{2}_{\peso{1}{\lambda_2-1}},  \\ [10pt]
\end{array}
\end{equation}
which matches with the equality in \eqref{eq:N2 adapted-vs-precan}.
We stress that to obtain the above equalities we have also used the definition of the adjusted pre-canonical bases elements and the (already proven) first and second row of \eqref{eq:N2 adapted-vs-precan}.

We proceed in a similar fashion for proving the last row in \eqref{eq:N2 adapted-vs-precan}. Indeed, after using \Cref{tab:XI} and Claim \ref{claim: -1-is-singular} we obtain for $\lambda_2\ge 2$ that
\begin{equation}
\begin{array}{rl}
 \widetilde{\N}^{2}_{\peso{0}{\lambda_2}}-{\N}^{2}_{\peso{0}{\lambda_2}} =     &   q^2 \HH{\peso{2}{\lambda_2-2}}-q^3 \tilde{\HH{}}_{\peso{2}{\lambda_2-3}}
     -q^3 \HH{\peso{1}{\lambda_2-2}}+q^4 \tilde{\HH{}}_{\peso{1}{\lambda_2-3}}\\[5pt]
     =& q^2 \widetilde{\N}^{4}_{\peso{2}{\lambda_2-2}} -q^3\widetilde{\N}^{4}_{\peso{2}{\lambda_2-3}} \\[5pt]
     =& q^2 \widetilde{\N}^{3}_{\peso{2}{\lambda_2-2}}  \\[5pt]
     =&   q^4\widetilde{\N}^{2}_{\peso{0}{\lambda_2-2}} + \displaystyle \sum_{k=2}^{\lambda_2}q^k {\N}^2_{\peso{k}{\lambda_2-k}}.
\end{array}
\end{equation}
as we wanted to show. 
\end{proof}

We then obtain the existence of an atomic decomposition for all $\HH{\lambda}$   with $\lambda\in X^+$ as a corollary.

\begin{proof}[Second proof of \Cref{thr:atomic}] 
By definition of the $\widetilde{\N}$-elements we have that 
\begin{equation}
    \HH{\lambda} =  \widetilde{\N}^6_{\lambda}\in   \sum_{\mu \leq \lambda} \mathbb{N}[q]\widetilde{\N}^2_{\mu}. 
\end{equation}

Therefore, to prove that $\HH{\lambda}$ has an atomic decomposition it is enough to show that the elements $\widetilde{\N}^2_{\mu}$ have an atomic decomposition. 
We demonstrate this by induction on the dominance order.

If  $\lambda =\peso{0}{0}$ Proposition \ref{prop: N2s} yields $\widetilde{\N}^2_{\peso{0}{0}}=\N^2_{\peso{0}{0}} $.
This is the base of our induction. 
We now fix $ \lambda >0$ and assume  that $\widetilde{\N}^2_\mu$ has an atomic decomposition for all $\mu <\lambda$.
Proposition \ref{prop: N2s} implies that 
\begin{equation}
   \widetilde{\N}^2_{\lambda} \in \sum_{\mu \leq \lambda} \mathbb{N}[q]{\N}^2_{\mu}+\sum_{\mu < \lambda} \mathbb{N}[q]\widetilde{\N}^2_{\mu}.
\end{equation}
Using our induction hypothesis we get that $  \widetilde{\N}^2_{\lambda}$ has an atomic decomposition. 
\end{proof}

\section{Computing Kostka--Foulkes polynomials}

In this section we explain how the atomic decomposition can be used to compute
Kostka--Foulkes polynomials.

We begin by observing that the two approaches developed in the previous
sections not only establish the atomic decomposition for canonical basis
elements, but in fact yield explicit algorithms for computing these
decompositions.

We now introduce the standard basis of the spherical Hecke algebra
\(\mathcal{H}^{\mathbf{sph}}\).
Unlike the canonical or atomic bases, the standard basis is not simply the set
\(\{\mathbf{H}_{\theta(\lambda)} \mid \lambda \in X^{+}\}\), since these elements do not lie in \(\mathcal{H}^{\mathbf{sph}}\).
To obtain the correct definition, one must sum over the standard basis
elements of \(\mathcal{H}\) that lie in the double coset
\(W_f \theta(\lambda) W_f\), weighted by the length. Explicitly,
\[
  \mathbf{H}_{\lambda}
  \coloneqq
  \sum_{w \in W_f \theta(\lambda) W_f}
  v^{\ell(\theta(\lambda)) - \ell(w)}\,
  \mathbf{H}_{w}.
\]
With this definition, the standard basis of \(\mathcal{H}^{\mathbf{sph}}\) is
precisely \(\{\mathbf{H}_{\lambda} \mid \lambda \in X^{+}\}\).

It is straightforward to verify that
\[
  \N_{\lambda}^{2}
  \;=\;
  \N_{\lambda}
  \;=\;
  \sum_{\mu \leq \lambda}
    q^{\operatorname{ht}(\lambda-\mu)}\,\mathbf{H}_{\mu}.
\]
Moreover, if we expand the canonical basis element as
\[
  \HH{\lambda}
  \;=\;
  \sum_{\mu \leq \lambda}
    K_{\lambda,\mu}(q)\,\mathbf{H}_{\mu},
\]
then
\[
  K_{\lambda,\mu}(q)
  \;=\;
  h_{\theta(\mu),\,\theta(\lambda)}(v),
  \qquad\text{with } q = v^{2}.
\]
In other words, the polynomials \(K_{\lambda,\mu}(q)\) are precisely the
Kostka--Foulkes polynomials.

\medskip

We now summarize the procedure for computing a Kostka--Foulkes polynomial
\(K_{\lambda,\mu}(q)\) from the atomic decomposition.

\begin{enumerate}
  \item Compute the atomic expansion of the canonical basis element:
  \[
    \HH{\lambda}
    =
    \sum_{\nu \leq \lambda}
      a_{\lambda,\nu}(q)\,\N_{\nu}.
  \]

  \item Determine the set of weights \(\nu\) appearing in this expansion such
  that \(\mu \leq \nu\).

  \item Finally,
  \[
    K_{\lambda,\mu}(q)
    =
    \sum_{\nu}
      q^{\operatorname{ht}(\nu-\mu)}\,
      a_{\lambda,\nu}(q),
  \]
  where the sum runs over all \(\nu\) satisfying \(\mu \leq \nu\) in step~(2).
\end{enumerate}

We implemented this algorithm in \textsc{SageMath}~10.6, and the corresponding
code can be found at (\href{https://github.com/bamuniz/AtomicDecompositionG2}{github.com/bamuniz/AtomicDecompositionG2}). 
To obtain the atomic decomposition we have used the formulas in \S\ref{sec:SBS-decompositions}.
Although the code is largely
self-explanatory, we conclude this section with an example illustrating its
usage.

\begin{example}
    Let $\lambda =\peso{a}{b} \in X^+$. If we want to compute the atomic decomposition of $\HH{\lambda}$ then we use the command \textbf{atomic(a,b)}. For instance if $\lambda=\peso{2}{4}$ the program  should return

\[\scalemath{0.85}{
\begin{array}{rl}
&\N_{\peso{2}{4}} + q\,\N_{\peso{3}{3}} + q\,\N_{\peso{1}{4}}  + q^2\,\N_{\peso{4}{2}} + (q^2 + q)\,\N_{\peso{2}{3}} + q^3\,\N_{\peso{5}{1}} +\\[5pt]
& q^2\,\N_{\peso{0}{4}} + (q^3 + q^2)\,\N_{\peso{3}{2}} + q^4\,\N_{\peso{6}{0}}+ (q^3 + q^2)\,\N_{\peso{1}{3}} + (q^4 + q^3)\,\N_{\peso{4}{1}} 
        +  \\[5pt]
& (2q^4 + q^3 + q^2)\,\N_{\peso{2}{2}}+ (q^5 + q^4)\,\N_{\peso{5}{0}} + q^3\,\N_{\peso{0}{3}} + (2q^5 + q^4 + q^3)\,\N_{\peso{3}{1}}+\\[5pt]
\HH{\peso{2}{4}} =   &      (q^5 + q^4 + q^3)\,\N_{\peso{1}{2}} 
        + (2q^6 + q^5 + q^4)\,\N_{\peso{4}{0}} 
        + (q^6 + 2q^5 + q^4 + q^3)\,\N_{\peso{2}{1}} + \\[5pt]
&   (q^6 + q^4)\,\N_{\peso{0}{2}} + (q^7 + 2q^6 + q^5 + q^4)\,\N_{\peso{3}{0}} +(q^7 + q^6 + q^5 + q^4)\,\N_{\peso{1}{1}} + \\[5pt]
&   (2q^8 + q^7 + 2q^6 + q^5 + q^4)\,\N_{\peso{2}{0}}  + (q^9 + q^8 + q^7 + q^6 + q^5)\,\N_{\peso{1}{0}} + \\[5pt]
      &   (q^7 + q^5)\,\N_{\peso{0}{1}} +(q^{10} + q^8 + q^6)\,\N_{\peso{0}{0}}.\\[5pt] 
\end{array}}
\]

On the other hand, if we want to compute $K_{\lambda, \mu}(q)$ for $\lambda=\peso{a}{b}$ and $\mu=\peso{c}{d}$, then we use the command \textbf{KF((a,b)(c,d))}.
For instance, if $\lambda= \peso{6}{9}$ and $\mu=\peso{3}{2}$ then the program  should return

\[
\begin{array}{rl}
   & \quad q^{44} +\quad q^{43} + \,\,\,2q^{42} + \,\, \,3q^{41} +\,\, \,5q^{40} +\,\,\, 6q^{39} + \,\,\,9q^{38} + 10q^{37} + 14q^{36} +   \\[10pt]
      & 16q^{35}+ 19q^{34} + 21q^{33} + 26q^{32} + 27q^{31} + 31q^{30} + 33q^{29} + 37q^{28} + 38q^{27} + \\
 K_{\peso{6}{9},\peso{3}{2}} =& \\
     &  42q^{26} + 42q^{25}+  46q^{24} + 46q^{23} + 48q^{22} + 47q^{21} + 51q^{20} + 48q^{19} + 50q^{18} +  \\[10pt]
     & 45q^{17} + 40q^{16} + 31q^{15}+ 26q^{14} + 18q^{13} + 14q^{12} +\,\,\, 8q^{11} + \,\,\,4q^{10} + q^{9}.
\end{array}
\]
\end{example}

\printbibliography

\end{document}